\documentclass[12pt]{amsart}
\usepackage[nobysame,abbrev,alphabetic]{amsrefs}
\usepackage{amssymb}
\usepackage[all]{xy}

\DeclareMathOperator{\Br}{Br}

\DeclareMathOperator{\Cor}{Cor}
\DeclareMathOperator{\Char}{char}

\DeclareMathOperator{\Gal}{Gal}
\DeclareMathOperator{\Hom}{Hom}

\DeclareMathOperator{\Img}{Im}
\DeclareMathOperator{\Inf}{Inf}

\DeclareMathOperator{\Ker}{Ker}

\DeclareMathOperator{\Res}{Res}

\addtolength{\textwidth}{20pt}
\addtolength{\evensidemargin}{-10pt}
\addtolength{\oddsidemargin}{-10pt}
\addtolength{\textheight}{15pt}


\raggedbottom

\begin{document}

\newtheorem{thm}{Theorem}[section]
\newtheorem{cor}[thm]{Corollary}
\newtheorem{lem}[thm]{Lemma}
\newtheorem{prop}[thm]{Proposition}
\newtheorem{defin}[thm]{Definition}
\newtheorem{exam}[thm]{Example}
\newtheorem{examples}[thm]{Examples}
\newtheorem{rem}[thm]{Remark}
\newtheorem*{thmA}{Theorem A}
\newtheorem*{mainthm}{Main Theorem}
\swapnumbers
\newtheorem{rems}[thm]{Remarks}
\newtheorem*{acknowledgment}{Acknowledgment}
\numberwithin{equation}{section}

\newcommand{\dec}{\mathrm{dec}}
\newcommand{\dirlim}{\varinjlim}
\newcommand{\discup}{\mathbin{\mathaccent\cdot\cup}}
\newcommand{\nek}{,\ldots,}
\newcommand{\inv}{^{-1}}
\newcommand{\Inv}{\mathrm{inv}}
\newcommand{\isom}{\cong}
\newcommand{\Massey}{\mathrm{Massey}}
\newcommand{\ndiv}{\hbox{$\,\not|\,$}}
\newcommand{\pr}{\mathrm{pr}}
\newcommand{\sep}{{\rm sep}}
\newcommand{\tensor}{\otimes}
\newcommand{\U}{\mathrm{U}}
\newcommand{\alp}{\alpha}
\newcommand{\gam}{\gamma}
\newcommand{\del}{\delta}
\newcommand{\eps}{\epsilon}
\newcommand{\lam}{\lambda}
\newcommand{\Lam}{\Lambda}
\newcommand{\sig}{\sigma}
\newcommand{\dbF}{\mathbb{F}}
\newcommand{\dbQ}{\mathbb{Q}}
\newcommand{\dbR}{\mathbb{R}}
\newcommand{\dbU}{\mathbb{U}}
\newcommand{\dbZ}{\mathbb{Z}}

\title{Triple Massey products and absolute Galois groups}

\author{Ido Efrat and Eliyahu Matzri}

\address{Department of Mathematics \\
         Ben-Gurion University of the Negev\\
         Be'er-Sheva 84105 \\
         Israel}
\email{efrat@math.bgu.ac.il, elimatzri@gmail.com}
\thanks{$^1$The authors were supported by the Israel Science Foundation (grant No.\ 152/13).
The second author was also partially supported by the Kreitman foundation}

\keywords{Triple Massey products,  absolute Galois groups, Galois cohomology}

\subjclass[2010]{Primary 12G05, 12E30, 16K50}

\maketitle

\begin{abstract}
Let $p$ be a prime number, $F$ a field containing a root of unity of order $p$, and $G_F$ the absolute Galois group.
Extending results of Hopkins, Wickelgren, Min\'a\v c and T\^an, we prove that the triple Massey product $H^1(G_F)^3\to H^2(G_F)$ contains $0$ whenever it is nonempty.
This gives a new restriction on the possible profinite group structure of $G_F$.
\end{abstract}

A main problem in modern Galois theory is to understand the group-theoretic structure of absolute Galois groups $G_F=\Gal(F_\sep/F)$ of fields $F$, that is, the possible symmetry patterns of roots of polynomials.
General restrictions on the possible structure of the profinite group $G_F$ are rare:
By classical results of Artin and Schreier, the torsion in $G_F$ can consist only of involutions.
In addition, the celebrated work of Voevodsky and Rost  (\cite{Voevodsky02}, \cite{Voevodsky11}) identifies the cohomology ring $H^*(G_F)=H^*(G_F,\dbZ/m)$ with the mod-$m$ Milnor $K$-ring $K^M_*(F)/m$, assuming existence of $m$-th roots of unity.
In particular, the graded ring $H^*(G_F)$ is generated by its degree $1$ elements, and its relations originate from the degree $2$ component.
This can be used to rule out many more profinite groups from being absolute Galois groups of fields (\cite{CheboluEfratMinac12}, \cite{EfratMinac11b}).
In fact, the Artin--Schreier restriction about the torsion also follows from the latter results \cite{EfratMinac11b}*{Ex.\ 6.4(2)}.

Very recently, a remarkable series of works by Hopkins,  Wickelgren, Min\'a\v c and T\^an indicated the possible existence of a new kind of general restrictions on the structure of absolute Galois groups, related to the differential graded algebra $C^*(G_F)=C^*(G_F,\dbZ/m)$ of continuous cochains on $G_F$.
The interplay between $C^*(G_F)$ and its cohomology algebra $H^*(G_F)$ gives rise to \textsl{external}  operations on $H^*(G_F)$, in addition to its (``internal") ring structure with respect to the cup product, notably, the \textsl{$n$-fold Massey products} $H^1(G_F)^n\to H^2(G_F)$.
The definition of the Massey product in the context of general differential algebras is recalled in \S\ref{section on Massey products}, and  at this stage we only mention that it is a multi-valued map, which for $n=2$ coincides with the cup product.
The Massey product $\langle\chi_1\nek\chi_n\rangle\subseteq H^2(G_F)$ is \textsl{essential} if it is non-empty, but does not contain $0$.
The above-mentioned works show that, under various assumptions, the \textsl{triple} Massey product for $H^*(G_F)$ is never essential.
Thus profinite groups $G$ for which $H^*(G)$ contains an essential triple Massey product cannot be realized as absolute Galois groups of fields satisfying these assumptions.
In \cite{MinacTan14a} Min\'a\v c and T\^an develop a method to produce such groups $G$, by examining their presentation by generators and relations modulo the $4$th term in the $p$-Zassenhaus filtration.
As a concrete example, the profinite group $G$ on $5$ generators $\sig_1\nek\sig_5$ and the single defining relation $[\sig_4,\sig_5][[\sig_2,\sig_3],\sig_1]$ gives rise to an essential triple Massey product \cite{MinacTan14a}*{Ex.\ 7.2}.

Specifically, assume that $m=p$ is prime, and $F$ contains a root of unity of order $p$ (so $\Char\,F\neq p$).
It was shown that the triple Massey product for $H^*(G_F)$ is never essential in the following situations:
\begin{enumerate}
\item[1)]
$p=2$ and $F$ is a local field or a global field (Hopkins and Wickelgren \cite{HopkinsWickelgren15});
\item[2)]
$p=2$ and $F$ is arbitrary (Min\'a\v c and T\^an \cite{MinacTan14a});
\item[3)]
$p$ is arbitrary and $F$ is a local field (Min\'a\v c and T\^an; follows from \cite{MinacTan14a}*{Th.\ 4.3} and \cite{MinacTan13}*{Th.\ 8.5});
\item[4)]
$p$ is arbitrary, and $F$ is a global field  (Min\'a\v c and T\^an \cite{MinacTan14b}).
\end{enumerate}
Moreover, it is conjectured in \cite{MinacTan13} that the $n$-fold Massey product above is never essential for every $n\geq3$.
Also, in \cite{EfratMatzri15} we find close connections between these results and classical facts in the theory of central simple algebras.
In particular, 2) is closely related to Albert's characterization from 1939 \cite{Albert39}  (as refined by Tignol \cite{Tignol79}; see also Rowen \cite{Rowen84} and \cite{Tignol81}) of the central simple algebras of exponent $2$ and degree $4$ as biquaternionic algebras.

Motivated by these works, we prove in this paper the above conjecture for triple Massey products for arbitrary $p$ and general fields $F$ as above:

\begin{mainthm}
Let $F$ be a field containing a root of unity of order $p$, and let $\chi_1,\chi_2,\chi_3\in H^1(G_F)$.
Then $\langle\chi_1,\chi_2,\chi_3\rangle$ is not essential.
\end{mainthm}

The Main Theorem was first proved by the second-named author using methods from the theory of central simple algebras, notably the Amitsur--Saltman theory of abelian crossed products \cite{Matzri14}.
The current paper, which replaces \cite{Matzri14}, is based on a shortcut which allows carrying the original crossed product computations to the framework of profinite group cohomology (see Proposition \ref{key prop}).
We also work in a more general formal context, and prove the Main Theorem for \textsl{$p$-Kummer formations} $(G,A,\{\kappa_U\}_U)$ (Theorem \ref{Main Theorem for formations}).
These structures axiomatize the relevant Galois-theoretic properties of absolute Galois groups:
the Kummer isomorphism, Hilbert's Theorem 90, and the connections between restriction, correstriction, and cup product.
The Main Theorem is just the case where $G=G_F$, $A=F_{\sep}^\times$, and the $\kappa_U$ are the Kummer maps
(see \S\ref{section on Kummer formations}).

The Main Theorem is in a partial analogy with the important work of Deligne, Griffiths, Morgan, and Sullivan \cite{DeligneGriffithsMorganSullivan75}, which proves that any compact K\"ahler manifold is formal.
This implies that its $n$-fold Massey products, with $n\geq3$, are non-essential in the de Rahm context (see also \cite{Huybrechts05}*{Ch.\ 3.A}).
On the other hand, links  in $\dbR^3$ provide examples of essential  Massey products in the algebra of singular cochains.
For instance, the \textsl{Borromean rings} give rise to an essential triple Massey product  \cite{Hillman12}*{\S10.1}, and this explains why they are not equivalent to three unconnected circles.
Thus the Main Theorem means that a phenomena such as the Borromean rings is impossible in this Galois cohomology context.
We also note that examples due to Positselski show that $H^*(G_F)$ may not be formal (\cite{Positselski11}*{\S9.11}, \cite{Positselski15}).

Among the other works on Massey products in Galois cohomology we mention those by Morishita \cite{Morishita04},  Sharifi (\cite{SharifiThesis}, \cite{Sharifi07}), Wickelgren (\cite{Wickelgren12a}, \cite{Wickelgren12b}), Vogel \cite{Vogel05}, G\"artner \cite{Gartner15}, and the first-named author \cite{Efrat14}.

We thank J\'an Min\'a\v c, Leonid Positselski, Louis Rowen, Nguyen Duy T\^an, Uzi Vishne and Kirsten Wickelgren for discussions over the past few years on various aspects of Massey products and of this work.
We also thank the referee for his/her very valuable comments.

\bigskip

{\sl Addendum (January 2015):} \quad
In the recent paper \cite{MinacTan14c} (which was posted after the initial version \cite{Matzri14} of the current work)  Min\'a\v c and T\^an also give a Galois-cohomological proof of the Main Theorem, which is similar at several points to our proof;
See also \cite{MinacTan15}.
Moreover, they point out that the standard restriction-correstriction argument allows one to remove the assumption that the field contains a root of unity of order $p$.
Namely, for a $p$th root of unity $\zeta$, the index of $U=G_{F(\zeta)}$ in $G=G_F$ is prime to $p$.
If $\chi_1,\chi_2,\chi_3\in H^1(G)$ and  $\alp\in \langle \chi_1,\chi_2,\chi_3\rangle$, then by our Main Theorem,
$\Res_U\alp=\Res_U(\chi_1)\cup\psi_1+\Res_U(\chi_3)\cup\psi_3$ for some $\psi_1,\psi_3\in H^1(U)$.
Hence $(G\colon U)\alp=\chi_1\cup\Cor_G(\psi_1)+\chi_3\cup\Cor_G(\psi_3)$, and consequently $0\in \langle \chi_1,\chi_2,\chi_3\rangle$ (see \S1).

\bigskip


\section{Massey products}
\label{section on Massey products}
We recall the definition and basic properties of Massey products of degree $1$ cohomology elements.
We first recall that a \textbf{differential graded algebra}  over a ring $R$ (abbreviated $R$-DGA)  is a graded $R$-algebra
$C^\bullet=\bigoplus_{r=0}^\infty C^r$ equipped with $R$-module homomorphisms
$\partial^s\colon C^r\to C^{r+1}$ such that $\partial=\bigoplus_{r=0}^\infty\partial^r$ satisfies $\partial\circ\partial=0$ and one has:
$\partial^{r+s}(ab)=\partial^r(a)b+(-1)^ra\partial^s(b)$ for  $a\in C^r$, $b\in C^s$ (the \textsl{Leibnitz rule}).
Set $Z^r=\Ker(\partial^r)$, $B^r=\Img(\partial^{r-1})$, and $H^r=Z^r/B^r$, and let $[c]$ denote the class of $c\in Z^r$ in $H^r$.
Then $H^\bullet=\bigoplus_{r=0}^\infty H^r$ has an induced $R$-DGA structure with zero differentials $\partial^r$.
We say that the DGA $C^\bullet$ is \textbf{graded-commutative} if $ab=(-1)^{rs}ba$ for $a\in C^r$ and $b\in C^s$.

We fix  an integer $n\geq 2$.
Consider a system $c_{ij}\in C^1$, where $1\leq i\leq j\leq n$ and $(i,j)\neq(1,n)$.
For any $i,j$ satisfying $1\leq i\leq j\leq n$ (including $(i,j)=(1,n)$) we define
\[
\widetilde{c_{ij}}=\sum_{r=i}^{j-1}c_{ir}c_{r+1,j}\in C^2.
\]
One says that $(c_{ij})$ is a \textbf{defining system of size $n$} in $C^\bullet$ if
$\partial c_{ij}=\widetilde{c_{ij}}$ for every $1\leq i\leq j\leq n$ with $(i,j)\neq(1,n)$.
We also say that the defining system $(c_{ij})$ is \textbf{on $c_{11}\nek c_{nn}$}.
Note that then $c_{ii}$ is a $1$-cocycle, $i=1,2\nek n$.
Further, $\widetilde{c_{1n}}$ is a $2$-cocycle (\cite{Kraines66}*{p.\ 432}, \cite{Fenn83}*{p.\ 233}).
Its cohomology class depends only on the cohomology classes $[c_{11}]\nek[c_{nn}]$ \cite{Kraines66}*{Th.\ 3}.
Given $c_1\nek c_n\in Z^1$, the \textbf{$n$-fold Massey product} of $\langle [c_1]\nek [c_n]\rangle$ is the subset of $H^2$ consisting of all cohomology classes $[\widetilde{c_{1n}}]$
obtained from defining systems $(c_{ij})$ of size $n$ on $c_1\nek c_n$ in $C^\bullet$.
The Massey product $\langle [c_1]\nek [c_n]\rangle$ is \textbf{essential} if it is non-empty but does not contain $0$.

When $n=2$, $\langle [c_1],[c_2]\rangle$ is always non-empty and consists only of $[c_1][c_2]$.
In the case $n=3$ one has the following well-known facts:

\begin{prop}[\cite{EfratMatzri15}*{Prop.\ 6.1}]
\label{structure of triple Massey products}
Let $c_1,c_2,c_3\in Z^1$.
\begin{enumerate}
\item[(a)]
$\langle [c_1],[c_2],[c_3]\rangle$ is non-empty if and only if $[c_1][c_2]=[c_2][c_3]=0$;
\item[(b)]
If $(c_{ij})$ is a defining system on $[c_1],[c_2],[c_3]$, then
$\langle [c_1],[c_2],[c_3]\rangle=[\widetilde {c_{13}}]+[c_1]H^1+H^1[c_3]$.
\end{enumerate}
\end{prop}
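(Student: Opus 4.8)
The plan is to unwind the definition of a size-$3$ defining system explicitly and then track how the output varies as the system is perturbed.

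For part (a), I would first write out what a defining system on $c_1,c_2,c_3$ amounts to. The diagonal entries are forced to be $c_{11}=c_1$, $c_{22}=c_2$, $c_{33}=c_3$, and since the pair $(1,3)$ is excluded, the only remaining entries are $c_{12}$ and $c_{23}$. The defining relations then reduce to $\partial c_{12}=\widetilde{c_{12}}=c_1c_2$ and $\partial c_{23}=\widetilde{c_{23}}=c_2c_3$. Hence a defining system exists if and only if $c_1c_2$ and $c_2c_3$ are coboundaries, i.e.\ $[c_1][c_2]=[c_1c_2]=0$ and $[c_2][c_3]=[c_2c_3]=0$. The forward implication is immediate, since the existence of $c_{12}$ with $\partial c_{12}=c_1c_2$ shows $c_1c_2\in B^2$; for the converse I would simply choose $c_{12},c_{23}\in C^1$ with $\partial c_{12}=c_1c_2$ and $\partial c_{23}=c_2c_3$, which is possible exactly under these vanishing hypotheses.

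For part (b), fix the given defining system $(c_{ij})$, whose output is the class of $\widetilde{c_{13}}=c_1c_{23}+c_{12}c_3$. Any other defining system $(c'_{ij})$ on $c_1,c_2,c_3$ has the same diagonal and differs only in the $(1,2)$ and $(2,3)$ slots; writing $c'_{12}=c_{12}+z_{12}$ and $c'_{23}=c_{23}+z_{23}$, the relations $\partial c'_{12}=c_1c_2=\partial c_{12}$ force $\partial z_{12}=0$, and likewise $\partial z_{23}=0$, so $z_{12},z_{23}\in Z^1$ are arbitrary cocycles. A direct expansion gives the new output
\[
\widetilde{c'_{13}}=c_1(c_{23}+z_{23})+(c_{12}+z_{12})c_3=\widetilde{c_{13}}+c_1z_{23}+z_{12}c_3,
\]
so in cohomology $[\widetilde{c'_{13}}]=[\widetilde{c_{13}}]+[c_1][z_{23}]+[z_{12}][c_3]$. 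As $z_{12}$ and $z_{23}$ range over $Z^1$, their classes range over all of $H^1$, which yields the inclusion $\langle[c_1],[c_2],[c_3]\rangle\subseteq[\widetilde{c_{13}}]+[c_1]H^1+H^1[c_3]$; reading the same computation backwards, i.e.\ choosing cocycle representatives of prescribed classes in $H^1$ and perturbing $(c_{ij})$ accordingly, gives the reverse inclusion.

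There is no serious obstacle here; the content is essentially bookkeeping, built on the already-cited fact that $\widetilde{c_{13}}$ is a $2$-cocycle. The one point that needs care is the order of the cup factors in $\widetilde{c_{13}}=c_1c_{23}+c_{12}c_3$, so that the two independent perturbations land in $[c_1]H^1$ on one side and $H^1[c_3]$ on the other, and the observation that the excluded index $(1,3)$ means $c_{13}$ is never an entry of the system but only the name of the output cochain, so that no additional freedom enters beyond the pair $(z_{12},z_{23})$.
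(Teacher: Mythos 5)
Your proof is correct: for $n=3$ the only off-diagonal entries of a defining system are $c_{12}$ and $c_{23}$, your reduction of the defining conditions to $\partial c_{12}=c_1c_2$, $\partial c_{23}=c_2c_3$ is exactly right, and the perturbation computation $\widetilde{c'_{13}}=\widetilde{c_{13}}+c_1z_{23}+z_{12}c_3$ with $z_{12},z_{23}\in Z^1$ arbitrary gives both inclusions for the indeterminacy. The paper itself gives no proof here --- it cites the statement from Efrat--Matzri, ``Vanishing of Massey products and Brauer groups,'' Prop.\ 6.1 --- and your bookkeeping is precisely the standard verification of that well-known fact.
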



\section{Cohomological Preliminaries}
We refer, e.g., to \cite{NeukirchSchmidtWingberg} for the basic notions and facts in profinite and Galois cohomology.
Let $p$ be a fixed prime number and let $G$ be a profinite group acting trivially on $\dbZ/p$.
We write $C^r(G)$ for the group $C^r(G,\dbZ/p)$ of continuous (inhomogenous) cochains $G^r\to\dbZ/p$.
Let $Z^r(G)=Z^r(G,\dbZ/p)$ and $B^r(G)=B^r(G,\dbZ/p)$ be its subgroups of $r$-cocycles and $r$-coboundaries, respectively, and let $H^r(G)=H^r(G,\dbZ/p)$ be the corresponding profinite cohomology group.
We identify $H^1(G)=\Hom(G,\dbZ/p)$.
Then $C^\bullet(G)=\bigoplus_{r=0}^\infty C^r(G)$ is a DGA over $\dbF_p$ with the cup product $\cup$.
 Its cohomology DGA $H^\bullet(G)=\bigoplus_{r=0}^\infty H^r(G)$ is graded-commutative.
We will need the following slightly refined version of this property for degree $1$ elements:

\begin{lem}
\label{graded-commutativity}\rm
Let $\chi_1,\chi_2\in H^1(G)$.
Then there exists $\psi\in C^1(G)$ such that $\partial\psi=\chi_1\cup\chi_2+\chi_2\cup\chi_1$ and $\psi$ is zero on $\Ker(\chi_i)$, $i=1,2$.
\end{lem}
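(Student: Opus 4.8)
The plan is to produce an \emph{explicit} $1$-cochain bounding $\chi_1\cup\chi_2+\chi_2\cup\chi_1$, rather than merely invoking the graded-commutativity of $H^\bullet(G)$; the whole point of the refinement is that the natural bounding cochain turns out to vanish on $\Ker(\chi_i)$ automatically. Since $G$ acts trivially on $\dbZ/p$, the classes $\chi_1,\chi_2\in H^1(G)=\Hom(G,\dbZ/p)$ are represented by homomorphisms, hence are themselves $1$-cocycles, and on $1$-cochains the cup product is given by $(\chi_1\cup\chi_2)(g,h)=\chi_1(g)\chi_2(h)$ for $g,h\in G$.

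First I would set $\psi\in C^1(G)$ to be (up to sign) the pointwise product, $\psi(g)=-\chi_1(g)\chi_2(g)$, where the multiplication is that of the ring $\dbZ/p$. This is a continuous map $G\to\dbZ/p$ precisely because $\chi_1$ and $\chi_2$ are.

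Next I would verify $\partial\psi=\chi_1\cup\chi_2+\chi_2\cup\chi_1$ by direct computation. Using the homomorphism property $\chi_i(gh)=\chi_i(g)+\chi_i(h)$ one expands $\psi(gh)=-(\chi_1(g)+\chi_1(h))(\chi_2(g)+\chi_2(h))$; substituting into the coboundary formula $(\partial\psi)(g,h)=\psi(g)-\psi(gh)+\psi(h)$, the diagonal terms $\chi_1(g)\chi_2(g)$ and $\chi_1(h)\chi_2(h)$ cancel, leaving exactly the cross terms $\chi_1(g)\chi_2(h)+\chi_2(g)\chi_1(h)$, which is $(\chi_1\cup\chi_2+\chi_2\cup\chi_1)(g,h)$. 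Finally, the vanishing is immediate: if $g\in\Ker(\chi_1)$ then $\chi_1(g)=0$, whence $\psi(g)=0$, and symmetrically for $\Ker(\chi_2)$.

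There is no serious obstacle here; the only thing requiring care is to match the sign conventions for the cup product and the coboundary operator, which is what pins down the sign of $\psi$. The conceptual content is simply the observation that the obvious cochain witnessing graded-commutativity, namely the pointwise product $\chi_1\chi_2$, already restricts to zero on each $\Ker(\chi_i)$, and this is exactly the strengthening asserted by the lemma.
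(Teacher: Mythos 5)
Your proof is correct and is essentially the paper's argument written in closed form: the cochain $\psi=-\chi_1\chi_2$ (pointwise product in $\dbZ/p$) is exactly the function that the paper's case-by-case definitions ($\bar\psi(\bar\sigma_1^i\bar\sigma_2^j)=-ij$ in the independent case, $\bar\psi(\bar\sigma_1^i)=-ki^2$ in the dependent case, $\psi=0$ in the degenerate case) compute to, and your direct verification of $\partial\psi=\chi_1\cup\chi_2+\chi_2\cup\chi_1$ and of the vanishing on $\Ker(\chi_1)$ and $\Ker(\chi_2)$ is valid under the paper's conventions for $\cup$ and $\partial$. If anything, your uniform formula is cleaner, since it dispenses with the paper's three-way split according to the linear dependence of $\chi_1,\chi_2$.
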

\begin{proof}
When $\chi_1,\chi_2$ are $\dbF_p$-linearly independent,
let $\bar G=G/(\Ker(\chi_1)\cap\Ker(\chi_2))\isom(\dbZ/p)^2$, and choose $\bar\sig_1,\bar\sig_2\in \bar G$ which are dual to $\chi_1,\chi_2$.
Define $\bar\psi\in C^1(\bar G)$ by $\bar\psi(\bar\sig_1^i\bar\sig_2^j)=-ij$ for $0\leq i,j<p$, and take $\psi=\Inf_G\bar\psi$ be its inflation to $H^1(G)$.

When $\chi_1,\chi_2$ are nonzero and $\dbF_p$-linearly dependent, we write $\chi_2=k\chi_1$ with $1\leq k<p$ and $\bar G=G/\Ker(\chi_1)\isom\dbZ/p$.
We define $\bar\psi\in C^1(\bar G)$ by $\bar\psi(\bar\sig_1^i)=-ki^2\in\dbZ/p$, and take $\psi=\inf_G\bar\psi$.

Finally, when at least one of  $\chi_1,\chi_2$ is $0$ we take $\psi=0\in C^1(G)$.
\end{proof}

Given a closed subgroup $U$ of $G$ let $\Res_U\colon H^i(G)\to H^i(U)$ be the restriction homomorphism.
When $U$ is open in $G$ we have a correstriction homomorphism $\Cor_G\colon H^i(U)\to H^i(G)$.
If $N$ is a closed normal subgroup of $G$, then every $\sig\in G$ induces a homomorphism $\sig\colon H^1(N)\to H^1(N)$, $\varphi\mapsto\sig\varphi$,
where $(\sig\varphi)(\tau)=\varphi(\sig\tau\sig\inv)$.

For a closed subgroup $U$ of $G$ and for $\chi\in H^1(U)$, we consider the sequence:
\begin{equation}
\label{ex seq}
H^1(\Ker(\chi))\xrightarrow{\Cor_U}H^1(U)\xrightarrow{\chi\cup}H^2(U)\xrightarrow{\Res_{\Ker(\chi)}}H^2(\Ker(\chi)).
\end{equation}

\begin{exam}
\label{exact seq for Galois groups}
\rm
When $G=G_F$ for a field $F$ containing a root of unity of order $p$, this sequence is exact for every such $U$ and $\chi$.
This corresponds to the isomorphism $K^\times/N_{L/K}(L^\times)\isom\Br(L/K)$ for the fixed fields $K,L$ of $U,\Ker(\chi)$, respectively, where $\Br(L/K)$ is the relative Brauer group of the field extension $L\supseteq K$ \cite{Draxl83}*{p.\ 73, Th.\ 1}.
\end{exam}

\begin{prop}
\label{reversing the order}
Suppose that  (\ref{ex seq}) with $U=G$ is exact at $H^2(G)$ for every $\chi\in H^1(G)$.
For every $\chi_1,\chi_2,\chi_3\in H^1(G)$ one has $\langle\chi_1,\chi_2,\chi_3\rangle=\langle\chi_3,\chi_2,\chi_1\rangle$.
\end{prop}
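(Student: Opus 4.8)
The plan is to combine the structural description of triple Massey products in Proposition \ref{structure of triple Massey products} with graded-commutativity, and to isolate the genuine input of the exactness hypothesis into a single restriction map.

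First I would dispose of the degenerate case and set up the coset bookkeeping. By Proposition \ref{structure of triple Massey products}(a), $\langle\chi_1,\chi_2,\chi_3\rangle\neq\emptyset$ iff $\chi_1\cup\chi_2=\chi_2\cup\chi_3=0$; since $H^\bullet(G)$ is graded-commutative this is equivalent to $\chi_3\cup\chi_2=\chi_2\cup\chi_1=0$, i.e.\ to $\langle\chi_3,\chi_2,\chi_1\rangle\neq\emptyset$. So I may assume both are non-empty. By Proposition \ref{structure of triple Massey products}(b) they are then cosets of $\chi_1\cup H^1(G)+H^1(G)\cup\chi_3$ and of $\chi_3\cup H^1(G)+H^1(G)\cup\chi_1$. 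Graded-commutativity of degree $1$ classes gives $\chi_i\cup H^1(G)=H^1(G)\cup\chi_i$, so both are cosets of the \emph{same} subgroup $V$. Hence it suffices to exhibit one common element, i.e.\ to show $[\widetilde{c_{13}}]-[\widetilde{d_{13}}]\in V$ for suitably chosen defining systems.

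Next I would build the reversed defining system explicitly. Fix cocycles $c_1,c_2,c_3$ and a defining system $(c_{ij})$ on them, so $\partial c_{12}=c_1\cup c_2$, $\partial c_{23}=c_2\cup c_3$ and $\widetilde{c_{13}}=c_1\cup c_{23}+c_{12}\cup c_3$. Using Lemma \ref{graded-commutativity}, choose $\mu,\nu\in C^1(G)$ with $\partial\mu=c_1\cup c_2+c_2\cup c_1$ and $\partial\nu=c_2\cup c_3+c_3\cup c_2$, where $\mu$ vanishes on $\Ker(\chi_1),\Ker(\chi_2)$ and $\nu$ vanishes on $\Ker(\chi_2),\Ker(\chi_3)$. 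Setting $d_{12}=\nu-c_{23}$ and $d_{23}=\mu-c_{12}$, a direct check gives $\partial d_{12}=c_3\cup c_2$ and $\partial d_{23}=c_2\cup c_1$, so $(d_{ij})$ with $d_{11}=c_3,d_{22}=c_2,d_{33}=c_1$ is a defining system on $c_3,c_2,c_1$, with $\widetilde{d_{13}}=c_3\cup d_{23}+d_{12}\cup c_1$.

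The heart of the argument is to show that the $2$-cocycle $Z=\widetilde{c_{13}}-\widetilde{d_{13}}$ has class in $V$; this is where the exactness hypothesis enters, and is the step I expect to be the main obstacle. I would restrict $Z$ to $U=\Ker(\chi_3)$. Since restriction of cochains is a DGA map and $\Res_U(\chi_3)=0$, after substituting $d_{12},d_{23}$ every term of $Z$ carrying $c_3$ as a cup factor dies, and the term $\nu\cup c_1$ dies because $\nu$ vanishes on $\Ker(\chi_3)$; what survives is exactly $\Res_U(c_1)\cup\Res_U(c_{23})+\Res_U(c_{23})\cup\Res_U(c_1)$. Here $\Res_U(c_{23})$ is a cocycle on $U$, since $\partial c_{23}=c_2\cup c_3$ restricts to $\Res_U(c_2)\cup 0=0$, so by graded-commutativity in $H^\bullet(U)$ this class equals $[\Res_U c_1][\Res_U c_{23}]+[\Res_U c_{23}][\Res_U c_1]=0$. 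Thus $\Res_{\Ker(\chi_3)}[Z]=0$, and the assumed exactness of (\ref{ex seq}) at $H^2(G)$ for $\chi=\chi_3$ yields $[Z]\in\chi_3\cup H^1(G)\subseteq V$. Hence $[\widetilde{c_{13}}]$ and $[\widetilde{d_{13}}]$ lie in the same coset of $V$, and the two Massey products coincide. The computation is symmetric: restricting instead to $\Ker(\chi_1)$ and using exactness for $\chi_1$ gives $[Z]\in H^1(G)\cup\chi_1$, so only one of the two exactness statements is actually needed.
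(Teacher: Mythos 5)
Your proposal is correct and follows essentially the same route as the paper's proof: both arguments reduce to the coset structure from Proposition \ref{structure of triple Massey products}(b), build the reversed defining system by subtracting the given cochains from the graded-commutativity cochains of Lemma \ref{graded-commutativity} (with the refined vanishing on a kernel used for exactly one of them), show the difference of the two representative $2$-cocycles restricts to a coboundary on the kernel of an outer character via graded-commutativity of $H^\bullet$ of that subgroup, and invoke the exactness of (\ref{ex seq}) to place the difference in the indeterminacy. The only difference is cosmetic: you restrict to $\Ker(\chi_3)$ where the paper restricts to $\Ker(\chi_1)$, which is the mirror image of the same computation.
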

\begin{proof}
Since both Massey products are cosets of $\chi_1\cup H^1(G)+\chi_3\cup H^1(G)$
(Proposition \ref{structure of triple Massey products}(b)), it suffices to show that $\langle\chi_1,\chi_2,\chi_3\rangle\supseteq \langle\chi_3,\chi_2,\chi_1\rangle$.
So let $\alp\in\langle\chi_3,\chi_2,\chi_1\rangle$.
Then there exist $\varphi_{32},\varphi_{21}\in C^1(G)$ such that
\[
\partial\varphi_{32}=\chi_3\cup\chi_2, \quad \partial\varphi_{21}=\chi_2\cup\chi_1, \quad
\alp=[\chi_3\cup\varphi_{21}+\varphi_{32}\cup\chi_1].
\]
Let $K=\Ker(\chi_1)$.
Lemma \ref{graded-commutativity} yields $\psi_{12}\in C^1(G)$ such that
$\partial\psi_{12}=\chi_1\cup\chi_2+\chi_2\cup\chi_1$ in $C^2(G)$ and $\psi_{12}=0$ on $K=\Ker(\chi_1)$.
The graded-commutativity of $H^\bullet(G)$ yields $\psi_{23}\in C^1(G)$ such that
$\partial\psi_{23}=\chi_2\cup\chi_3+\chi_3\cup\chi_2$ in $C^2(G)$.
Taking $\varphi_{12}=\psi_{12}-\varphi_{21}$ and $\varphi_{23}=\psi_{23}-\varphi_{32}$, we obtain that
$\partial\varphi_{12}=\chi_1\cup\chi_2$ and $\partial\varphi_{23}=\chi_2\cup\chi_3$.
It therefore suffices to show that $[\chi_1\cup\varphi_{23}+\varphi_{12}\cup\chi_3]$ and $\alp$ are equal modulo the indeterminicity ${\chi_1\cup H^1(G)+\chi_3\cup H^1(G)}$ of both Massey products.

Now $\Res_K(\partial\varphi_{21})=\Res_K(\chi_2\cup\chi_1)=0$, so $\Res_K\varphi_{21}\in Z^1(K)$.
The graded-commutativity of $H^\bullet(K)$ gives $\Res_K(\varphi_{21}\cup\chi_3+\chi_3\cup\varphi_{21})\in B^2(K)$.
As $\Res_K\psi_{12}=0$ we obtain that
\[
\begin{split}
&\Res_K(\chi_1\cup\varphi_{23}+\varphi_{12}\cup\chi_3)=\Res_K(\varphi_{12}\cup\chi_3)=
-\Res_K(\varphi_{21}\cup\chi_3) \\
\equiv&\Res_K(\chi_3\cup\varphi_{21})=\Res_K(\chi_3\cup\varphi_{21}+\varphi_{32}\cup\chi_1) \pmod{B^2(K)}.
\end{split}
\]
Hence $\Res_K[\chi_1\cup\varphi_{23}+\varphi_{12}\cup\chi_3]=\Res_K\alp$.
By (\ref{ex seq}),
 \[
\alp- [\chi_1\cup\varphi_{23}+\varphi_{12}\cup\chi_3]\in \chi_1\cup H^1(G),
\]
as desired.
\end{proof}

\begin{rem}
\rm
Vogel \cite{VogelThesis}*{Example 1.2.11} proves the assertion of Proposition \ref{reversing the order} under the assumption that $G=F/R$ for a free pro-$p$ group $F$ and a closed normal subgroup $R$ of $F$ contained in the third term of its lower central sequence.
In a topological context, Kraines \cite{Kraines66}*{Th.\ 8} proves  that Massey products of arbitrary length remain the same up to a sign when the order of the entries is reversed.
\end{rem}

\begin{prop}
\label{reductions}
Suppose that  (\ref{ex seq}) with $U=G$ is exact at $H^2(G)$ for every $\chi\in H^1(G)$.
The following conditions are equivalent:
\begin{enumerate}
\item[(1)]
For every $\chi_1,\chi_2,\chi_3\in H^1(G)$, the Massey product $\langle\chi_1,\chi_2,\chi_3\rangle$ is not essential.
\item[(2)]
For every $\chi_1,\chi_2,\chi_3\in H^1(G)$ such that the pairs $\chi_1,\chi_3$ and $\chi_2,\chi_3$ are $\dbF_p$-linearly independent,  $\langle\chi_1,\chi_2,\chi_3\rangle$ is not essential.
\end{enumerate}
\end{prop}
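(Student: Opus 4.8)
The plan is to prove (2)$\Rightarrow$(1), the implication (1)$\Rightarrow$(2) being trivial since (2) is a special case. So fix $\chi_1,\chi_2,\chi_3\in H^1(G)$. If $\langle\chi_1,\chi_2,\chi_3\rangle$ is empty there is nothing to prove, so by Proposition \ref{structure of triple Massey products}(a) I may assume $\chi_1\cup\chi_2=\chi_2\cup\chi_3=0$. By Proposition \ref{structure of triple Massey products}(b) the product is then the coset of its indeterminacy
$I:=\chi_1\cup H^1(G)+\chi_3\cup H^1(G)$
(using graded-commutativity of $H^\bullet(G)$ to rewrite $H^1(G)\cup\chi_3$), so ``not essential'' means precisely that one value $[\widetilde{c_{13}}]$ lies in $I$. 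The cases in which some $\chi_i$ vanishes are immediate: taking the zero cochain for that entry produces a defining system with $\widetilde{c_{13}}=0$. Hence I may assume $\chi_1,\chi_2,\chi_3\neq0$, and I split into cases according to the dependence of the two pairs appearing in (2).

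First I would dispose of the case where $\chi_1,\chi_3$ are linearly dependent, say $\chi_3=a\chi_1$ with $a\in\dbF_p^\times$. Choose cocycle representatives with $c_3=ac_1$, and set $K=\Ker(\chi_1)=\Ker(\chi_3)$. Since $c_1\colon G\to\dbZ/p$ is a homomorphism vanishing on its kernel, $\Res_K c_1=\Res_K c_3=0$ as cochains. For any defining system, $\widetilde{c_{13}}=c_1\cup c_{23}+c_{12}\cup c_3$, and both summands restrict to $0$ on $K$, so $\Res_K\widetilde{c_{13}}=0$. The assumed exactness of (\ref{ex seq}) at $H^2(G)$ (for $\chi=\chi_1$) then gives $[\widetilde{c_{13}}]\in\chi_1\cup H^1(G)\subseteq I$, so the product contains $0$. (This covers in particular the totally degenerate case where all three characters are proportional.)

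The remaining case---$\chi_1,\chi_3$ independent while $\chi_2,\chi_3$ are dependent---is the heart of the matter. Writing $\chi_2=\mu\chi_3$ with $\mu\in\dbF_p^\times$ and using $\langle\chi_1,\mu\chi_3,\chi_3\rangle=\mu\langle\chi_1,\chi_3,\chi_3\rangle$, I reduce to showing that $\langle\chi_1,\chi_3,\chi_3\rangle$ contains $0$, where now $\chi_1\cup\chi_3=\chi_3\cup\chi_3=0$. The naive approach---restricting a value to $\Ker(\chi_3)$ and quoting exactness---fails here, because $\chi_3\cup\chi_3$ is \emph{not} the coboundary of a cochain trivial on $\Ker(\chi_3)$ (its transgression to $H^2(G/\Ker\chi_3)$ is the nonzero square class, an obstruction that is most visible when $p=2$, where even Lemma \ref{graded-commutativity} only yields $2c_3\cup c_3=\partial\rho$). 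The device I would use to bypass this is additivity of the Massey product in its last entry, applied to the decomposition
\[
\chi_3=\chi_1+(\chi_3-\chi_1).
\]
Concretely, fix one $c_{12}$ with $\partial c_{12}=c_1\cup c_2$, and split $c_{23}=c_{23}'+c_{23}''$ with $\partial c_{23}'=c_2\cup c_1$ and $\partial c_{23}''=c_2\cup(c_2-c_1)$; these exist because $\chi_3\cup\chi_1=0$ and $\chi_3\cup(\chi_3-\chi_1)=0$. The resulting value splits as $V=V'+V''$, where $V'$ is a value of $\langle\chi_1,\chi_3,\chi_1\rangle$ and $V''$ a value of $\langle\chi_1,\chi_3,\chi_3-\chi_1\rangle$.

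Now $\langle\chi_1,\chi_3,\chi_1\rangle$ has equal outer entries, so by the second paragraph it equals $\chi_1\cup H^1(G)$ and thus $V'\in\chi_1\cup H^1(G)$; and the pairs $\chi_1,\chi_3-\chi_1$ and $\chi_3,\chi_3-\chi_1$ are both independent, so hypothesis (2) applies to the non-empty product $\langle\chi_1,\chi_3,\chi_3-\chi_1\rangle$ and gives $V''\in\chi_1\cup H^1(G)+(\chi_3-\chi_1)\cup H^1(G)=I$. Hence $V=V'+V''\in I$, and $\langle\chi_1,\chi_3,\chi_3\rangle$ contains $0$. Finally, when both pairs are independent, (2) applies directly. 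The step I expect to require the most care is precisely the last-entry additivity $V=V'+V''$: one must verify that a single choice of $c_{12}$ can be shared among the three defining systems, so that the values genuinely add and the indeterminacies combine inside $I$. This sharing, together with the decomposition $\chi_3=\chi_1+(\chi_3-\chi_1)$ that routes the repeated entry into one already-settled case and one instance of (2), is what makes the argument uniform in $p$ and avoids any delicate cochain-level computation at $p=2$.
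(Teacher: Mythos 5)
Your proof is correct, and it agrees with the paper's on everything except the one genuinely hard case, where it takes a different route. Both arguments treat (1)$\Rightarrow$(2) as trivial, kill the case $\chi_2=0$ (and your vanishing $\chi_1$ or $\chi_3$) with a zero defining system, and handle dependent outer entries by restricting a cochain-level value to the common kernel $K=\Ker(\chi_1)=\Ker(\chi_3)$ and invoking the assumed exactness of (\ref{ex seq}) at $H^2(G)$. The divergence is the case where $\chi_1,\chi_3$ are independent, $\chi_2\neq 0$, and $\chi_2,\chi_3$ are dependent: the paper disposes of it by citing Proposition \ref{reversing the order}, i.e.\ the symmetry $\langle\chi_1,\chi_2,\chi_3\rangle=\langle\chi_3,\chi_2,\chi_1\rangle$, after which hypothesis (2) applies directly; that symmetry in turn rests on the refined cochain-level statement of Lemma \ref{graded-commutativity} (a primitive $\psi$ of $\chi_1\cup\chi_2+\chi_2\cup\chi_1$ vanishing on $\Ker(\chi_1)$). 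You instead normalize to $\langle\chi_1,\chi_3,\chi_3\rangle$, share a single $c_{12}$, and split the value additively along the decomposition $\chi_3=\chi_1+(\chi_3-\chi_1)$ of the \emph{last} entry, landing in $\langle\chi_1,\chi_3,\chi_1\rangle$ (your dependent-outer-entries case) plus $\langle\chi_1,\chi_3,\chi_3-\chi_1\rangle$ (an honest instance of (2), since both pairs $\chi_1,\chi_3-\chi_1$ and $\chi_3,\chi_3-\chi_1$ are independent). The cochain identity $V=V'+V''$ does hold exactly with the shared $c_{12}$, and the indeterminacies $\chi_1\cup H^1(G)+(\chi_3-\chi_1)\cup H^1(G)$ do equal $I$, so the step you flag as delicate goes through. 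The trade-off: the paper's route isolates the reversal formula as a statement of independent interest (and reuses Lemma \ref{graded-commutativity}), whereas yours is more self-contained, needing only the ordinary graded-commutativity of $H^\bullet(G)$, Proposition \ref{structure of triple Massey products}, and an elementary additivity computation, at the price of not recovering the reversal symmetry itself.
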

\begin{proof}
(1)$\Rightarrow$(2): \quad Trivial.

\medskip

(2)$\Rightarrow$(1): \quad
Suppose that $\langle\chi_1,\chi_2,\chi_3\rangle\neq\emptyset$.
By Proposition \ref{structure of triple Massey products}(a),  $\chi_1\cup\chi_2=0=\chi_2\cup\chi_3$ in $H^2(G)$.
Therefore there exist $\varphi_{12},\varphi_{23}\in C^1(G)$ such that
$\partial\varphi_{12}=\chi_1\cup\chi_2$ and $\partial \varphi_{23}=\chi_2\cup\chi_3$ in $C^2(G)$.
Then $\chi_1\cup\varphi_{23}+\varphi_{12}\cup\chi_3\in Z^2(G)$.
By Proposition \ref{structure of triple Massey products}(b), we need to find $\varphi_{12},\varphi_{23}$ such that the cohomology class of this $2$-cocycle is contained in
the subset $\chi_1\cup H^1(G)+\chi_3\cup H^1(G)$ of $H^2(G)$.
We break the discussion into several cases.

\medskip

\noindent
Case I: {\sl The pairs $\chi_1,\chi_3$ and $\chi_2,\chi_3$ are $\dbF_p$-linearly independent.} \quad
Then we simply apply (2).

\medskip

\noindent
Case II: {\sl $\chi_1,\chi_3$ are $\dbF_p$-linearly dependent}.  \quad
We may assume that $\chi_1=i\chi_3$ for some $i\in \dbF_p$.
Given $\varphi_{12},\varphi_{23}$ as above we then have
\[
\Res_{\Ker(\chi_3)}(\chi_1\cup\varphi_{23}+\varphi_{12}\cup\chi_3)=0.
\]
By (\ref{ex seq}), $[\chi_1\cup\varphi_{23}+\varphi_{12}\cup\chi_3]\in \chi_3\cup H^1(G)$, and we are done.

\medskip

\noindent
Case III: {\sl $\chi_2=0$.} \quad
Then $\chi_1\cup\chi_2=0=\chi_2\cup\chi_3$ in $C^2(G)$, so for $\varphi_{12}=\varphi_{23}=0$ we have $[\chi_1\cup\varphi_{23}+\varphi_{12}\cup\chi_3]=0$.

\medskip

\noindent
Case IV: {\sl $\chi_1,\chi_3$ are $\dbF_p$-linearly independent, $\chi_2\neq0$, and $\chi_2,\chi_3$ are $\dbF_p$-linearly dependent.} \quad
Then $\chi_1,\chi_2$ are also $\dbF_p$-independent.
By Proposition \ref{reversing the order}, $\langle\chi_1,\chi_2,\chi_3\rangle=\langle\chi_3,\chi_2,\chi_1\rangle$, and by (2), $\langle\chi_3,\chi_2,\chi_1\rangle$ is not essential.
\end{proof}



\section{cup products as coboundaries}
Let $G$ be a profinite group and let $\chi_a,\chi_b\in H^1(G)$ be $\dbF_p$-linearly independent.
Set $N_a=\Ker(\chi_a)$, $N_b=\Ker(\chi_b)$ and $L=N_a\cap N_b$.
Thus $G/L\isom(G/N_a)\times(G/N_b)\isom(\dbZ/p)^2$.
Let $\sig_a,\sig_b\in G$ be dual to $\chi_1,\chi_b$, respectively, i.e.,
\[
\chi_a(\sig_a)=1,\quad \chi_a(\sig_b)=0,\quad\chi_b(\sig_a)=0,\quad \chi_b(\sig_b)=1.
\]
Let $\tau=[\sig_a,\sig_b]=\sig_a\sig_b\sig_a\inv\sig_b\inv$.

\begin{prop}
\label{Ker omega as Hp3}
Suppose that $\omega\in H^1(N_b)$ satisfies $\omega-\sig_b\omega =\Res_{N_b}\chi_a$.
Then
\begin{enumerate}
\item[(a)]
$\omega(\tau)=1$;
\item[(b)]
$N_a\cap\Ker(\omega)$ is normal in $G$;
\item[(c)]
$(G:N_a\cap\Ker(\omega))=p^3$;
\item[(d)]
The images $\bar\sig_a,\bar\sig_b,\bar\tau$ of $\sig_a,\sig_b,\tau$, respectively, in $\bar G=G/(N_a\cap\Ker(\omega))$
generate $\bar G$ and satisfy $[\bar\tau,\bar\sig_a]=[\bar\tau,\bar\sig_b]=1$.
\end{enumerate}
\end{prop}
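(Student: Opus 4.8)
The plan is to extract, before invoking the hypothesis on $\omega$, two elementary features of the configuration. Since $\chi_b(\sig_a)=0$ we have $\sig_a\in N_b$, so $\omega(\sig_a)$ is defined; and since $\chi_a,\chi_b$ are homomorphisms into the abelian group $\dbZ/p$, the commutator $\tau=[\sig_a,\sig_b]$ lies in $N_a\cap N_b=L$, whence $\chi_a(\tau)=0$. For (a) I would then write $\tau=\sig_a(\sig_b\sig_a\sig_b\inv)\inv$ with both factors in $N_b$, so that $\omega(\tau)=\omega(\sig_a)-\omega(\sig_b\sig_a\sig_b\inv)$. Rewriting the hypothesis pointwise as $\omega(\sig_b\rho\sig_b\inv)=\omega(\rho)-\chi_a(\rho)$ for $\rho\in N_b$ and applying it with $\rho=\sig_a$ gives $\omega(\tau)=\chi_a(\sig_a)=1$. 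This is the one genuinely computational step, and it is short.

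The remaining parts hinge on identifying $M:=N_a\cap\Ker(\omega)$. Because $\Ker(\omega)\subseteq N_b$, one has $M=L\cap\Ker(\omega)=\Ker(\omega|_L)$. By (a), $\tau\in L$ with $\omega(\tau)=1$, so $\omega|_L\ne0$ and $[L:M]=p$; together with $[G:L]=p^2$ this gives $[G:M]=p^3$, proving (c). For (b) I would show that $\omega|_L$ is invariant under $G$-conjugation, which forces its kernel $M$ to be normal. It suffices to test invariance on topological generators: conjugation by elements of $L$ and by $\sig_a$ (which lies in $N_b$) preserves $\omega|_L$ simply because $\dbZ/p$ is abelian and $\omega$ is multiplicative on $N_b$, while conjugation by $\sig_b$ introduces the correction $-\chi_a$, which vanishes on $L\subseteq N_a$. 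As the set of $g$ fixing $\omega|_L$ is a closed subgroup containing these generators, invariance holds throughout $G$.

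Part (d) separates into generation and the two commutator relations. Generation follows by projecting along $G/M\twoheadrightarrow G/L$: the images of $\sig_a,\sig_b$ generate $G/L$, while $\bar\tau$ generates the kernel $L/M\isom\dbZ/p$, so the three images generate $\bar G$. Both $[\tau,\sig_a]$ and $[\tau,\sig_b]$ lie in $L$ by normality, so it remains to check that $\omega$ annihilates them. For $[\tau,\sig_a]$ this is automatic since $\tau,\sig_a\in N_b$ and $\omega$ is a homomorphism into an abelian group. For $[\tau,\sig_b]$ I would again use the hypothesis to rewrite $\omega(\sig_b\tau\inv\sig_b\inv)=\omega(\tau\inv)-\chi_a(\tau\inv)$, and $\chi_a(\tau)=0$ then makes the two $\omega$-terms cancel.

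I expect the only real subtlety to be bookkeeping: at each evaluation of $\omega$ one must check whether the argument actually lies in $N_b$ (so that $\omega$ is multiplicative) and whether the $\chi_a$-correction survives. The whole argument is organized around the two facts that repeatedly trivialize these corrections, namely $\sig_a\in N_b$ and $L\subseteq\Ker(\chi_a)$, so that every correction term either vanishes or equals the single nonzero value $\chi_a(\sig_a)=1$ that drives (a).
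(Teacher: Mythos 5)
Your proposal is correct and follows essentially the same route as the paper: part (a) by splitting $\tau$ into two factors of $N_b$ and applying the hypothesis at $\sig_a$; parts (b)--(d) by identifying $N_a\cap\Ker(\omega)$ with $\Ker(\omega|_L)$, checking $G$-invariance of $\omega|_L$ on topological generators (the paper uses $\langle N_b,\sig_b\rangle=G$ where you use $L,\sig_a,\sig_b$, an immaterial difference), and evaluating $\omega$ on the two commutators exactly as in the paper. No gaps.
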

\begin{proof}
(a)\quad
Since  $\sig_a,\sig_b\sig_a\sig_b\inv\in N_b$, the assumption on $\omega$ gives
\[
\omega(\tau)=\omega(\sig_a)+\omega(\sig_b\sig_a\inv\sig_b\inv)=
\omega(\sig_a)-(\sig_b\omega)(\sig_a)=(\Res_{N_b}\chi_a)(\sig_a)=1.
\]

(b) \quad
For every  $\sig\in N_b$ we have $\sig\omega=\omega$, and therefore $\sig(\Res_L\omega)=\Res_L\omega$.
By the assumption on $\omega$, $\Res_L\omega-\sig_b(\Res_L\omega)=\Res_L\chi_a=0$.
Therefore $\sig(\Res_L\omega)=\Res_L\omega$ for every $\sig\in \langle N_b,\sig_b\rangle=G$.
This means that $\omega(\sig h\sig\inv)=\omega(h)$ for every $\sig\in G$ and $h\in L$.
Consequently, $\Ker(\Res_L\omega)$ is normal in $G$, and we observe that $N_a\cap\Ker(\omega)=\Ker(\Res_L\omega)$.

\medskip

(c) \quad
We note that every commutator in $G$ is contained in $L$.
By this and (a), $\tau\in L\setminus\Ker(\Res_L\omega)$, whence $(L:\Ker(\Res_L\omega))=p$.
Consequently,
\[
(G:N_a\cap\Ker(\omega))=(G:L)(L:\Ker(\Res_L\omega))=p^2\cdot p=p^3.
\]

(d)\quad
The images of $\bar\sig_a,\bar\sig_b$ generate $G/L\isom(\dbZ/p)^2$.
Also, $L/(N_a\cap\Ker(\omega))=L/\Ker(\Res_L(\omega))$ is generated by $\bar\tau$, by (a).
Hence $\bar\sig_a,\bar\sig_b,\bar\tau$ generate $\bar G$.
Since $\sig_a,\tau\in N_b$  we have $\omega(\tau\sig_a\tau\inv\sig_a\inv)=0$, so $\tau\sig_a\tau\inv\sig_a\inv\in N_a\cap\Ker(\omega)$.
Therefore $[\bar\tau,\bar\sig_a]=1$.

As $\tau\in N_a\cap N_b$,
\[
\omega(\tau\sig_b\tau\inv\sig_b\inv)=\omega(\tau)+(\sig_b\omega)(\tau\inv)=\omega(\tau)-(\sig_b\omega)(\tau)
=(\Res_{N_b}\chi_a)(\tau)=0.
\]
Therefore $\tau\sig_b\tau\inv\sig_b\inv\in N_a\cap\Ker(\omega)$, i.e., $[\bar\tau,\bar\sig_b]=1$.
\end{proof}

It follows from Proposition \ref{Ker omega as Hp3} that $\bar G$
is the Heisenberg group $H_{p^3}$ ($D_4$ when $p=2$).
We refer to \cite{SharifiThesis}*{Ch.\ II} for related results.

\begin{prop}
\label{the cochain varphi}
Suppose that $\omega\in H^1(N_b)$ satisfies $\omega-\sig_b\omega =\Res_{N_b}\chi_a$.
There exists $\varphi\in C^1(G)$ with
$\partial\varphi=-\chi_a\cup\chi_b$ in $C^2(G)$ and $\omega=\Res_{N_b}\varphi$ in $C^1(N_b)$.
\end{prop}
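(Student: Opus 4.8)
The plan is to construct $\varphi$ by \emph{extending} $\omega$ from $N_b$ to all of $G$ along the coset decomposition of $G$ over $N_b$, and then to verify the cochain identity $\partial\varphi=-\chi_a\cup\chi_b$ directly. Since $G/N_b$ is cyclic of order $p$ generated by the image of $\sig_b$, every $g\in G$ is written uniquely as $g=\sig_b^jh$ with $h\in N_b$ and $j\in\{0,1\nek p-1\}$ representing $\chi_b(g)$. I would \emph{define} $\varphi(\sig_b^jh)=\omega(h)$. Unwinding the definitions, the required equality in $C^2(G)$ is equivalent to the functional equation
\[
\varphi(gg')=\varphi(g)+\varphi(g')+\chi_a(g)\chi_b(g')\qquad(g,g'\in G).
\]
For $g,g'\in N_b$ this merely records that $\varphi|_{N_b}=\omega$ is a homomorphism, which is built into the definition and already delivers the second required property $\omega=\Res_{N_b}\varphi$.

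The computational heart is to control how $\omega$ transforms under conjugation by powers of $\sig_b$. Rewriting the hypothesis $\omega-\sig_b\omega=\Res_{N_b}\chi_a$ as $\omega(\sig_b\tau\sig_b\inv)=\omega(\tau)-\chi_a(\tau)$ for $\tau\in N_b$, and iterating, I obtain $\omega(\sig_b^{-k}\tau\sig_b^{k})=\omega(\tau)+k\chi_a(\tau)$ for every $k$. Writing $g=\sig_b^{j}h$ and $g'=\sig_b^{j'}h'$ and pushing the middle power of $\sig_b$ past $h$ gives $gg'=\sig_b^{j+j'}\bigl(\sig_b^{-j'}h\sig_b^{j'}\bigr)h'$. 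Applying $\varphi$ and the transformation rule, and using $\chi_a(h)=\chi_a(g)$ (as $\chi_a(\sig_b)=0$), produces exactly the term $\chi_a(g)\chi_b(g')=j'\chi_a(h)$ predicted by the functional equation. This settles the identity whenever $j+j'<p$.

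The step I expect to be the main obstacle is the \emph{wrap-around} case $j+j'\ge p$: there $\sig_b^{j+j'}=\sig_b^{\,p}\,\sig_b^{(j+j')-p}$, so the $N_b$-part of $gg'$ acquires an extra factor $\sig_b^{\,p}$, and the computation above picks up an unwanted term $\omega(\sig_b^{\,p})$. For $\varphi$ to be well defined and to satisfy the functional equation one needs precisely $\omega(\sig_b^{\,p})=0$, i.e. $\sig_b^{\,p}\in\Ker(\omega)$. This is exactly the structural input supplied before the proposition: by Proposition \ref{Ker omega as Hp3} the quotient $\bar G=G/(N_a\cap\Ker(\omega))$ has order $p^3$ and is the Heisenberg group $H_{p^3}$, in which the image $\bar\sig_b$ of $\sig_b$ has order $p$; hence $\sig_b^{\,p}\in N_a\cap\Ker(\omega)\subseteq\Ker(\omega)$, killing the extra term. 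In fact the cleanest organization is to carry out the whole construction inside this finite quotient: the characters $\chi_a,\chi_b$ and the cochain $\chi_a\cup\chi_b$ all factor through $\bar G$, and $\omega$ descends to $\bar N_b=N_b/(N_a\cap\Ker(\omega))$, so one first builds the analogous cochain $\bar\varphi\in C^1(\bar G)$ on the Heisenberg group---where $\bar\sig_b^{\,p}=1$ makes the wrap-around vacuous---and then takes $\varphi$ to be its inflation. Continuity of $\varphi$ is then automatic, being inflated from a finite quotient, and its restriction to $N_b$ is $\omega$ by construction.
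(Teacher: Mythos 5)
Your construction is the same as the paper's: the paper works in $\bar G=G/(N_a\cap\Ker(\omega))$, defines $\bar\varphi(\bar\sig_b^i\bar\sig_a^j\bar\tau^k)=\omega(\sig_a)j+k$ and inflates, which is exactly your $\varphi(\sig_b^jh)=\omega(h)$; your verification of the functional equation $\varphi(gg')=\varphi(g)+\varphi(g')+\chi_a(g)\chi_b(g')$ in the range $j+j'<p$ is the same computation as the paper's, and is correct. You have also correctly isolated the one delicate point (the wrap-around) and correctly reduced it to the single identity $\omega(\sig_b^{\,p})=0$; this identity is genuinely necessary for the conclusion, since any $\varphi$ with $\partial\varphi=-\chi_a\cup\chi_b$ satisfies $\varphi(\sig_b^{n+1})=\varphi(\sig_b^n)+\varphi(\sig_b)+\chi_a(\sig_b^n)\chi_b(\sig_b)=\varphi(\sig_b^n)+\varphi(\sig_b)$ and hence $\varphi(\sig_b^{\,p})=p\,\varphi(\sig_b)=0$.

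The gap is in your justification of that identity. Proposition \ref{Ker omega as Hp3} gives $(G:N_a\cap\Ker(\omega))=p^3$ and the relations $[\bar\tau,\bar\sig_a]=[\bar\tau,\bar\sig_b]=1$, but it does not determine the order of $\bar\sig_b$: one only knows $\sig_b^{\,p}\in L=N_a\cap N_b$, so that $\bar\sig_b^{\,p}=\bar\tau^{\,\omega(\sig_b^p)}$, and the hypothesis $\omega-\sig_b\omega=\Res_{N_b}\chi_a$ gives no control on $\omega(\sig_b^{\,p})$ (applied to $\tau=\sig_b^{\,p}$ it reads $0=0$). Concretely, take $G=\langle a,b\mid a^{p^2}=b^p=1,\ bab\inv=a^{1+p}\rangle$ of order $p^3$ and exponent $p^2$ ($D_4$ when $p=2$), with $\sig_b=a$, $\sig_a=b$, so $N_b=\langle a^p,b\rangle\isom(\dbZ/p)^2$; then $\omega(a^p)=1$, $\omega(b)=0$ satisfies $\omega-\sig_b\omega=\Res_{N_b}\chi_a$, while $[\sig_a,\sig_b]=a^p=\sig_b^{\,p}$ forces $\omega(\sig_b^{\,p})=\omega(\tau)=1\neq 0$ by Proposition \ref{Ker omega as Hp3}(a). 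Here $\bar G=G$ is \emph{not} the Heisenberg group, and no $\varphi$ as in the statement exists. So ``$\bar G\isom H_{p^3}$, hence $\bar\sig_b^{\,p}=1$'' is not a consequence of Proposition \ref{Ker omega as Hp3} but is precisely the assertion at stake (and for $p=2$ even $\bar G\isom D_4$ would not pin down the order of $\bar\sig_b$). To be fair, the paper's own proof passes over the identical point: the evaluation $\bar\varphi(\bar\sig_b^{i+r}\bar\sig_a^{j+s}\bar\tau^{k+t+jr})=\omega(\sig_a)(j+s)+k+t+jr$ tacitly assumes $\bar\sig_b^{\,p}=1$ when $i+r\geq p$. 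Either way, $\omega(\sig_b^{\,p})=0$ must be added as a hypothesis or verified for the particular $\omega$ to which the proposition is applied; your proof is not complete without that input.
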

\begin{proof}
Let $\bar\chi_a,\bar\chi_b\in Z^1(\bar G)$ be the characters with inflations $\chi_a,\chi_b$, respectively, to $G$.
Every element of $\bar G$ can be uniquely written as $\bar\sig_b^i\bar\sig_a^j\bar\tau^k$ for integers $0\leq i,j,k<p$
(which we also consider as elements of $\dbZ/p$).
We define $\bar\varphi\in C^1(\bar G)$ by $\varphi(\bar\sig)=\omega(\sig_a)j+k$.
Let $\varphi\in C^1(G)$ be the inflation of $\bar\varphi$ to $G$.

To compute $\partial\varphi$, we take $0\leq i,j,k,r,s,t<p$.
Then $\bar\sig_a^j\bar\sig_b^r=\bar\sig_b^r\bar\sig_a^j\bar\tau^{jr}$, so
\[
\bar\varphi(\bar\sig_b^i\bar\sig_a^j\bar\tau^k\bar\sig_b^r\bar\sig_a^s\bar\tau^t)
=\bar\varphi(\bar\sig_b^{i+r}\bar\sig_a^{j+s}\bar\tau^{k+t+jr})
=\omega(\sig_a)(j+s)+k+t+jr.
\]
Therefore
\[
\begin{split}
(\partial\bar\varphi)&(\bar\sig_b^i\bar\sig_a^j\bar\tau^k,\bar\sig_b^r\bar\sig_a^s\bar\tau^t)
=\bar\varphi(\bar\sig_b^i\bar\sig_a^j\bar\tau^k)+\bar\varphi(\bar\sig_b^r\bar\sig_a^s\bar\tau^t)-
\bar\varphi(\bar\sig_b^i\bar\sig_a^j\bar\tau^k\bar\sig_b^r\bar\sig_a^s\bar\tau^t)\\
&=\omega(\sig_a)j+k+\omega(\sig_a)s+t-(\omega(\sig_a)(j+s)+k+t+jr)=-jr\\
&=-\bar\chi_a(\bar\sig_b^i\bar\sig_a^j\bar\tau^k)\bar\chi_b(\bar\sig_b^r\bar\sig_a^s\bar\tau^t)
=-(\bar\chi_a\cup\bar\chi_b)(\bar\sig_b^i\bar\sig_a^j\bar\tau^k,\bar\sig_b^r\bar\sig_a^s\bar\tau^t).
\end{split}
\]
The first equality of the Proposition now follows by inflation to $G$.

For the second equality, let $\sig\in N_b$ and let $\bar\sig$ be the image of $\sig$ in $N_b/(N_a\cap\Ker(\omega))$.
We may write $\bar\sig=\bar\sig_a^j\bar\tau^k$ for some integers $0\leq j,k<p$.
Since $\omega(\tau)=1$ (Proposition \ref{Ker omega as Hp3}(a)) we have
\[
\omega(\sig)=\omega(\sig_a^j\tau^k)=\omega(\sig_a)j+k=\varphi(\sig).
\qedhere
\]
\end{proof}


\section{Massey products containing $0$}
Let $\chi_1,\chi_2,\chi_3\in H^1(G)$,  and set $N_1=\Ker(\chi_1)$,  $N_3=\Ker(\chi_3)$ and $M=N_1\cap N_3$.
Suppose that $\sig_3\in G$ satisfies $\chi_1(\sig_3)=0$ and $\chi_3(\sig_3)=1$.
Also let $\omega\in H^1(N_3)$.
We assume that
\begin{equation}
\label{setup}
\omega-\sig_3\omega=\Res_{N_3}\chi_2, \quad
\chi_1\cup\chi_2=0,
\end{equation}
and $\chi_2,\chi_3$ are $\dbF_p$-linearly independent.

\begin{lem}
\label{res Massey2}
The triple Massey product $\langle \chi_1,\chi_2,\chi_3\rangle$ has a representative $\alp$ such that $\Res_{N_3}\alp=-\Res_{N_3}(\chi_1)\cup \omega$.
\end{lem}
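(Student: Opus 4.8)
The plan is to construct an explicit defining system for $\langle\chi_1,\chi_2,\chi_3\rangle$ and then compute the restriction of the resulting representative to $N_3$. By Proposition~\ref{structure of triple Massey products}(a), the hypothesis $\chi_1\cup\chi_2=0$ together with the nonemptiness of the Massey product forces also $\chi_2\cup\chi_3=0$; so I first want to exhibit cochains $\varphi_{12},\varphi_{23}\in C^1(G)$ with $\partial\varphi_{12}=\chi_1\cup\chi_2$ and $\partial\varphi_{23}=\chi_2\cup\chi_3$, and then set $\alp=[\chi_1\cup\varphi_{23}+\varphi_{12}\cup\chi_3]$, which is a valid representative of the Massey product. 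The key is to choose $\varphi_{23}$ wisely, using the given $\omega$.

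**Choosing the cochains via Proposition~\ref{the cochain varphi}.** Here is where I would invoke the machinery of the previous section. The setup $(\chi_2,\chi_3)$ with $\chi_2,\chi_3$ linearly independent matches the hypotheses of Section~3 with $(\chi_a,\chi_b)=(\chi_2,\chi_3)$, $N_a=N_2$, $N_b=N_3$, and the role of $\omega$ is exactly the given $\omega\in H^1(N_3)$ satisfying $\omega-\sig_3\omega=\Res_{N_3}\chi_2$. Proposition~\ref{the cochain varphi} then produces a $\varphi\in C^1(G)$ with $\partial\varphi=-\chi_2\cup\chi_3$ and $\Res_{N_3}\varphi=\omega$. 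I would set $\varphi_{23}=-\varphi$, so that $\partial\varphi_{23}=\chi_2\cup\chi_3$ and $\Res_{N_3}\varphi_{23}=-\omega$. For $\varphi_{12}$ I only need \emph{some} cochain with $\partial\varphi_{12}=\chi_1\cup\chi_2$; since $\chi_1\cup\chi_2=0$ in $H^2(G)$ by \eqref{setup}, such a $\varphi_{12}$ exists, and its precise choice will only affect $\alp$ by an element of the indeterminacy $\chi_1\cup H^1(G)+\chi_3\cup H^1(G)$, which is harmless since $\alp$ is only being pinned down as a representative.

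**Computing the restriction.** With this choice, $\alp=[\chi_1\cup\varphi_{23}+\varphi_{12}\cup\chi_3]$, and I restrict to $N_3=\Ker(\chi_3)$. The term $\varphi_{12}\cup\chi_3$ dies under $\Res_{N_3}$ because $\Res_{N_3}\chi_3=0$. Thus $\Res_{N_3}\alp=[\Res_{N_3}(\chi_1)\cup\Res_{N_3}\varphi_{23}]=[\Res_{N_3}(\chi_1)\cup(-\omega)]=-\Res_{N_3}(\chi_1)\cup\omega$, which is exactly the claimed formula. The only subtlety is bookkeeping: one must check that $\Res_{N_3}(\chi_1)\cup\Res_{N_3}\varphi_{23}$ is genuinely a cocycle whose class equals $\Res_{N_3}$ of the class of the $G$-cocycle, i.e.\ that restriction commutes with the cup product at the cochain level and with passage to cohomology—standard facts.

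**Anticipated obstacle.** The real work is not in this lemma but in verifying that the hypotheses of Proposition~\ref{the cochain varphi} are met by the present $\omega$; once that identification $(\chi_a,\chi_b)=(\chi_2,\chi_3)$ is made, everything is formal. The one point demanding care is the sign and the direction of the defining system: the Massey product $\langle\chi_1,\chi_2,\chi_3\rangle$ pairs $\chi_2$ with $\chi_3$ on the \emph{right}, and Proposition~\ref{the cochain varphi} is stated for $-\chi_a\cup\chi_b=-\chi_2\cup\chi_3$, so I must track the minus sign through to land on $-\Res_{N_3}(\chi_1)\cup\omega$ rather than its negative. I expect this sign-tracking, rather than any conceptual difficulty, to be the main thing to get right.
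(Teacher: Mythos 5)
Your proof is correct and takes essentially the same route as the paper: both apply Proposition \ref{the cochain varphi} with $(\chi_a,\chi_b)=(\chi_2,\chi_3)$ to obtain $\varphi\in C^1(G)$ with $\partial\varphi=-\chi_2\cup\chi_3$ and $\Res_{N_3}\varphi=\omega$, form the representative $\chi_1\cup(-\varphi)+\varphi_{12}\cup\chi_3$, and note that the term $\varphi_{12}\cup\chi_3$ dies under $\Res_{N_3}$. The sign-tracking you flag is handled identically in the paper, and your observation that the choice of $\varphi_{12}$ is immaterial after restriction matches the paper's (implicit) treatment.
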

\begin{proof}
Since $\chi_1\cup\chi_2=0$ in $H^2(G)$ there exists $\varphi_{12}\in C^2(G)$ such that $\partial\varphi_{12}=\chi_1\cup\chi_2$ in $C^2(G)$.
Proposition \ref{the cochain varphi} and (\ref{setup}) give rise to $\varphi_{23}\in C^1(G)$ with
$\partial\varphi_{23}=-\chi_2\cup\chi_3$ and $\omega=\Res_{N_3}\varphi_{23}$.
Then $\chi_1\cup(-\varphi_{23})+\varphi_{12}\cup\chi_3$ is a $2$-cocycle with cohomology class $\alp$ in $\langle \chi_1,\chi_2,\chi_3\rangle$.
We have
\[
\Res_{N_3}(\chi_1\cup(-\varphi_{23})+\varphi_{12}\cup\chi_3) =-\Res_{N_3}(\chi_1)\cup \omega,
\]
in $C^2(N_3)$, whence $\Res_{N_3}\alp=-\Res_{N_3}(\chi_1)\cup \omega$ in $H^2(N_3)$.
\end{proof}

\begin{thm}
\label{criterion for trivial Massey product}
In the above setup (\ref{setup}), assume further  that the sequence (\ref{ex seq}) is exact for every open subgroup $U$ of $G$ of index dividing $p$ and every $\chi\in H^1(U)$.
Then the following conditions are equivalent:
\begin{enumerate}
\item[(1)]
$0\in\langle\chi_1,\chi_2,\chi_3\rangle$;
\item[(2)]
There exists $\lam\in H^1(G)$ such that $\Res_{N_3}(\chi_1\cup\lam)=\Res_{N_3}(\chi_1)\cup\omega$;
\item[(3)]
$\omega\in\Res_{N_3} H^1(G)+\Cor_{N_3} H^1(M)$.
\end{enumerate}
\end{thm}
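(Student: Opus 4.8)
The plan is to establish the two equivalences $(1)\Leftrightarrow(2)$ and $(2)\Leftrightarrow(3)$ separately, in each case reducing the assertion to an instance of the exactness hypothesis on (\ref{ex seq}) for an appropriate choice of subgroup and character. Two ingredients will be in play throughout: Lemma \ref{res Massey2}, which provides a representative $\alp$ of $\langle\chi_1,\chi_2,\chi_3\rangle$ with $\Res_{N_3}\alp=-\Res_{N_3}(\chi_1)\cup\omega$, and Proposition \ref{structure of triple Massey products}(b), which (after rewriting $H^1(G)\cup\chi_3$ via graded-commutativity) presents the whole product as the coset $\alp+\chi_1\cup H^1(G)+\chi_3\cup H^1(G)$. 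Thus $0\in\langle\chi_1,\chi_2,\chi_3\rangle$ means exactly that $\alp\in\chi_1\cup H^1(G)+\chi_3\cup H^1(G)$. Since $\chi_2,\chi_3$ are linearly independent we have $\chi_3\neq0$, so $N_3=\Ker(\chi_3)$ is open of index $p$, and both (\ref{ex seq}) with $U=G,\ \chi=\chi_3$ and with $U=N_3,\ \chi=\Res_{N_3}\chi_1$ are available.

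For $(1)\Leftrightarrow(2)$ I would restrict to $N_3$, where $\Res_{N_3}\chi_3=0$, so that every element of $\chi_3\cup H^1(G)$ dies. If $0$ lies in the product, writing $\alp=\chi_1\cup\lam'+\chi_3\cup\mu$ and restricting yields $-\Res_{N_3}(\chi_1)\cup\omega=\Res_{N_3}(\chi_1)\cup\Res_{N_3}\lam'$, and taking $\lam=-\lam'$ gives (2). Conversely, given $\lam$ as in (2), the element $\alp'=\alp+\chi_1\cup\lam$ is still a representative of the Massey product and satisfies $\Res_{N_3}\alp'=0$; exactness of (\ref{ex seq}) at $H^2(G)$ for $U=G,\ \chi=\chi_3$ then forces $\alp'\in\chi_3\cup H^1(G)$, whence $0\in\langle\chi_1,\chi_2,\chi_3\rangle$.

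For $(2)\Leftrightarrow(3)$ the key observation is that $M=\Ker(\Res_{N_3}\chi_1)$, so the relevant instance is (\ref{ex seq}) with $U=N_3,\ \chi=\Res_{N_3}\chi_1$, used at its left cup-product spot: exactness at $H^1(N_3)$ says that $\psi\in H^1(N_3)$ satisfies $\Res_{N_3}(\chi_1)\cup\psi=0$ if and only if $\psi\in\Cor_{N_3}H^1(M)$. Condition (2) says precisely that $\Res_{N_3}(\chi_1)\cup(\Res_{N_3}\lam-\omega)=0$ for some $\lam\in H^1(G)$, which by this exactness means $\Res_{N_3}\lam-\omega\in\Cor_{N_3}H^1(M)$ for some $\lam$, i.e.\ $\omega\in\Res_{N_3}H^1(G)+\Cor_{N_3}H^1(M)$, which is (3). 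The degenerate case $\Res_{N_3}\chi_1=0$ (when $\chi_1,\chi_3$ are dependent) is harmless, since then $M=N_3$ and both (2) and (3) hold trivially. I expect the main friction here to be bookkeeping rather than conceptual, namely keeping the signs consistent, treating $0\in\langle\chi_1,\chi_2,\chi_3\rangle$ as a genuine coset-membership statement about $\alp$, and invoking (\ref{ex seq}) at the correct spot, at $H^2$ of $G$ for the first equivalence and at $H^1$ of $N_3$ for the second.
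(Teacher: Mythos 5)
Your proposal is correct and follows essentially the same route as the paper: it uses Lemma \ref{res Massey2} to produce the representative $\alp$ with $\Res_{N_3}\alp=-\Res_{N_3}(\chi_1)\cup\omega$, reads condition (1) as the coset statement $\alp\in\chi_1\cup H^1(G)+\chi_3\cup H^1(G)$ via Proposition \ref{structure of triple Massey products}(b), and invokes (\ref{ex seq}) at $H^2(G)$ (for $U=G$, $\chi=\chi_3$) and at $H^1(N_3)$ (for $U=N_3$, $\chi=\Res_{N_3}\chi_1$, noting $\Ker(\Res_{N_3}\chi_1)=M$) exactly as the paper does. Your spelled-out treatment of $(2)\Leftrightarrow(3)$, which the paper dispatches with ``follows again from (\ref{ex seq})'', is the intended argument.
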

\begin{proof}
(1)$\Rightarrow$(2): \quad
Lemma \ref{res Massey2} yields $\alp\in\langle\chi_1,\chi_2,\chi_3\rangle$ with $\Res_{N_3}\alp=-\Res_{N_3}(\chi_1)\cup\omega$.
Since also $0\in\langle\chi_1,\chi_2,\chi_3\rangle$, Proposition \ref{structure of triple Massey products}(b) gives $\lam,\lam'\in H^1(G)$ such that $-\alp=\chi_1\cup\lam+\chi_3\cup\lam'$.
Now this implies that $\Res_{N_3}\alp=-\Res_{N_3}(\chi_1\cup\lam)$, whence (2).

\medskip

(2)$\Rightarrow$(1): \quad
For $\alp$ as in Lemma \ref{res Massey2}, $\Res_{N_3}(\alp+\chi_1\cup\lam)=0$.
By the exact sequence (\ref{ex seq}),
$\alp+\chi_1\cup\lam\in\chi_3\cup H^1(G)$, whence (1).

\medskip

(2)$\Leftrightarrow$(3): \quad
This follows again from (\ref{ex seq}).
\end{proof}

\section{Kummer formations}
\label{section on Kummer formations}
Let $A$ be a discrete $G$-module.
For a closed normal subgroup $U$ of $G$ let $A^U$ be the submodule of $A$ fixed by $U$.
There is an induced $G/U$-action on $A^U$.

For every open normal subgroups $U\leq U'$ of $G$ let $N_{U'/U}\colon A^U\to A^{U'}$ be the trace map $a\mapsto \sum_\sig\sig a$, where $\sig$ ranges over a system of representatives for the cosets of $U'$ modulo $U$.

Let $I_{U'/U}$ be the subgroup of $A^U$ consisting of all elements of the form $\bar\sig a-a$ with $\bar\sig\in U'/U$ and $a\in A$.
We recall that
\[
\hat H^{-1}(U'/U,A^U)=\Ker(N_{U'/U})/I_{U'/U}.
\]
When $U'/U$ is cyclic with generator $\bar\sig$, the subgroup $I_{U'/U}$ consists of all elements $\bar\sig a-a$, with $a\in A$ (since $\bar\sig^k-1=(\bar\sig-1)\sum_{i=0}^{k-1}\bar\sig^i$).
Then $\hat H^{-1}(U'/U,A^U)\isom H^1(U'/U,A^U)$  \cite{NeukirchSchmidtWingberg}*{Prop.\ 1.7.1}.

\begin{defin}
\rm
A \textbf{$p$-Kummer formation} $(G,A,\{\kappa_U\}_U)$ consists of a profinite group $G$, a discrete $G$-module $A$, and for each open normal subgroup $U$ of $G$ a $G$-equivariant epimorphism $\kappa_U\colon A^U\to H^1(U)$ such that for every open normal subgroup $U$ of $G$ the following conditions hold:
\begin{enumerate}
\item[(i)]
the sequence (\ref{ex seq}) is exact for every $\chi\in H^1(U)$;
\item[(ii)]
$\Ker(\kappa_U)=pA^U$;
\item[(iii)]
for every open normal subgroup $U'$ of $G$ such that $U\leq U'$, there are commutative squares
\[
\xymatrix{
A^U\ar[r]^{\kappa_U} & H^1(U)&&A^U\ar[r]^{\kappa_U}\ar[d]_{N_{U'/U}} & H^1(U)\ar[d]^{\Cor_{U'}} \\
A^{U'}\ar@{^{(}->}[u]\ar[r]^{\kappa_{U'}} & H^1(U')\ar[u]_{\Res_U};&&A^{U'}\ar[r]^{\kappa_{U'}} & H^1(U');
}
\]
\item[(iv)]
for every open normal subgroup $U'$ of $G$ such that $U\leq U'$ and $(U':U)=p$ one has $\hat H^{-1}(U'/U,A^U)=0$.
\end{enumerate}
\end{defin}

\begin{exam}
\label{Kummer formations of fields}
\rm
Let $F$ be a field which contains a root of unity of order $p$.
We fix an isomorphism between the group $\mu_p$ of $p$th roots of unity and $\dbZ/p$.
Given an open subgroup $U$ of $G_F$ let $E=F_\sep^U$ be its fixed field.
The \textbf{Kummer homomorphism} $\kappa_U\colon E^\times\to H^1(U)$ is the connecting homomorphism arising from the short exact sequence of $U$-modules
\[
0\to\dbZ/p\to F_{\rm sep}^\times\xrightarrow{p}F_{\rm sep}^\times\to 1.
\]
By Hilbert's Theorem 90 it is surjective.
Then $(G_F,F_{\rm sep}^\times,\{\kappa_U\}_U)$ is a $p$-Kummer formation.
Indeed, (i) was pointed out in Example \ref{exact seq for Galois groups}.
(ii) is the standard fact that $\Ker(\kappa_U)=(E^\times)^p$, and (iii)  follows from the commutativity of connecting homomorphisms with restrictions and correstrictions.
For (iv) use the isomorphism  $\hat H^{-1}(U'/U,A^U)=H^1(U'/U,A^U)$ for $U'/U$ cyclic and Hilbert's Theorem 90.
\end{exam}

\begin{prop}
\label{key prop}
Let $(G,A,\{\kappa_U\}_U)$ be a $p$-Kummer formation.
Let $M_1,M_3$ be distinct normal subgroups of $G$ of index $p$, let $M=M_1\cap M_3$, and
let  $\sig_3\in M_1$ satisfy $G=\langle M_3,\sig_3\rangle$.
Suppose that $\lam_1\in H^1(M_1)$ and $\lam_3\in H^1(M_3)$ satisfy $\Cor_G\lam_1=\Cor_G\lam_3$.
Then there exists  $\omega\in H^1(M_3)$ such that
\[
 \sig_3\omega-\omega=-\Res_{M_3}\Cor_G\lam_3, \quad \omega\in\Res_{M_3}H^1(G)+\Cor_{M_3}H^1(M).
\]
\end{prop}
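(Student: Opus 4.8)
The plan is to transport the entire statement into the module $A$ by means of the Kummer isomorphisms $\kappa_U\colon A^U/pA^U\xrightarrow{\sim}H^1(U)$, and to settle it by a computation with norm and ``derivation'' elements in the group rings of the two cyclic quotients, Hilbert 90 entering through axiom (iv). First I would fix generators: since $M_1M_3=G$ I may choose $\sig_1\in M_3$ generating $G/M_1$, while $\sig_3\in M_1$ generates $G/M_3$ by hypothesis, so that $G/M=\langle\bar\sig_1\rangle\times\langle\bar\sig_3\rangle\isom(\dbZ/p)^2$. Lifting $\lam_1,\lam_3$ to $a_1\in A^{M_1}$ and $a_3\in A^{M_3}$ via $\kappa$, the correstriction compatibility square (iii) turns the hypothesis $\Cor_G\lam_1=\Cor_G\lam_3$ into the norm congruence $N_{G/M_1}(a_1)\equiv N_{G/M_3}(a_3)\pmod{pA^G}$. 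I also record two companion identities: the logarithmic one $(\sig_3-1)\sum_i i\sig_3^i=p-\sum_i\sig_3^i$ in $\dbZ[G/M_3]$, used to compute coboundaries of the derivation sums, and the characteristic-$p$ identity $\sum_{i=0}^{p-1}\sig_3^i\equiv(\sig_3-1)^{p-1}\pmod p$ (and its $\sig_1$-analogue), which holds because $\sig_3^p$ acts trivially on $A^{M_3}$.

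The construction then proceeds as follows. I claim it suffices to produce $w\in A^M$ with
\[
N_{M_3/M}\bigl(a_1+(\sig_3-1)w\bigr)\in pA^{M_3};
\]
granting this, I set $\omega=\Cor_{M_3}\kappa_M(w)$. This $\omega$ lies in $\Cor_{M_3}H^1(M)$, which gives the required membership in $\Res_{M_3}H^1(G)+\Cor_{M_3}H^1(M)$ at once. For the first relation, $\Cor_{M_3}$ is $\sig_3$-equivariant (as $\sig_3$ normalizes $M$ and $M_3$), so $(\sig_3-1)\omega=\kappa_{M_3}\bigl(N_{M_3/M}((\sig_3-1)w)\bigr)$; the displayed property of $w$ together with the hypothesis congruence gives $N_{M_3/M}((\sig_3-1)w)\equiv-N_{G/M_3}(a_3)\pmod{pA^{M_3}}$, and applying $\kappa_{M_3}$ and (iii) identifies this with $-\Res_{M_3}\Cor_G\lam_3$, as wanted. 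Thus everything is reduced to the existence of $w$.

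The main obstacle is exactly this existence. Using $N_{G/M_1}(a_1)=N_{M_3/M}(a_1)$ and the characteristic-$p$ identity, the congruence to be solved becomes $N_{M_3/M}(a_1)\equiv(\sig_3-1)^{p-1}a_3\pmod{pA^{M_3}}$, so it is equivalent to the membership $(\sig_3-1)^{p-2}a_3\in N_{M_3/M}(A^M)+A^G+pA^{M_3}$. To attack it I would introduce the auxiliary element $e=\sum_i i\sig_3^i a_3-\sum_j j\sig_1^j a_1\in A^M$, whose partial coboundaries satisfy, modulo $pA^M$, the relations $(\sig_3-1)e\equiv-N_{G/M_3}(a_3)$ and $(\sig_1-1)e\equiv N_{M_3/M}(a_1)$; the hypothesis makes these two right-hand sides agree after the characteristic-$p$ reduction, which is what renders $e$ usable. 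Hilbert 90 then enters through axiom (iv) and the periodicity $\hat H^{\mathrm{odd}}\isom\hat H^{-1}=0$ of cyclic Tate cohomology, applied in both commuting directions: in the $\sig_1$-direction it gives $\Ker(N_{M_3/M})=(\sig_1-1)A^M$, and in the $\sig_3$-direction it identifies the relevant denominator $\{z\in A^{M_3}:(\sig_3-1)z\in pA^{M_3}\}$ with $A^G+pA^{M_3}$. The crux of the whole proof — and the step I expect to be most delicate — is to balance these two directions inside $A^M$ so as to land $(\sig_3-1)^{p-2}a_3$ in the required sum, controlling en route the residual $p$-torsion of the even Tate groups that axiom (iv) does not directly kill.
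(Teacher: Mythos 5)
Your opening reduction is sound and in fact parallels the paper's computation: lifting $\lam_1,\lam_3$ to $a_1\in A^{M_1}$, $a_3\in A^{M_3}$, turning the hypothesis into $N_{G/M_1}(a_1)-N_{G/M_3}(a_3)\in pA^G$ via (ii) and (iii), and observing that it suffices to produce $w\in A^M$ with $N_{M_3/M}\bigl(a_1+(\sig_3-1)w\bigr)\in pA^{M_3}$, after which $\omega=\Cor_{M_3}\kappa_M(w)$ has both required properties. Such a $w$ does exist, and your verification of the two properties from it is correct. The problem is that you never prove the existence of $w$: you explicitly defer it (``the crux of the whole proof\dots the step I expect to be most delicate''), so the argument has a hole exactly where the real content lies.

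Moreover, the tools you propose for that step are not adequate. Axiom (iv) in the $\sig_1$- and $\sig_3$-directions gives exact integral statements, namely $\Ker(N_{M_3/M})=(\sig_1-1)A^M$ and $\Ker(N_{G/M_3})=(\sig_3-1)A^{M_3}$; it says nothing about the mod-$p$ congruences your element $e$ satisfies (that is precisely the ``residual $p$-torsion of the even Tate groups'' you mention, and it genuinely need not vanish). In particular, your claimed identification $\{z\in A^{M_3}:(\sig_3-1)z\in pA^{M_3}\}=A^G+pA^{M_3}$ is false in general: for $A=F_\sep^\times$ and $L=F_\sep^{M_3}$ it would assert that every $z\in L^\times$ with $\sig_3(z)/z\in(L^\times)^p$ lies in $F^\times(L^\times)^p$, which fails whenever $L/F$ embeds into a cyclic extension of degree $p^2$ (e.g.\ $p=2$, $F=\dbQ$, $L=\dbQ(\sqrt2)$, $z=2+\sqrt2$). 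Luckily you only need the easy direction of your ``equivalence,'' but the hard membership $(\sig_3-1)^{p-2}a_3\in N_{M_3/M}(A^M)+A^G+pA^{M_3}$ cannot be reached by balancing the two coordinate directions. The missing idea is to apply (iv) to the \emph{diagonal} subgroup $M'=\langle M,\sig_1\sig_3\rangle$ and to an exact, not mod-$p$, relation: writing $N_{G/M_1}a_1-N_{G/M_3}a_3=pb$ with $b\in A^G$, one checks $N_{M'/M}(a_3-a_1+b)=0$ on the nose, so (iv) yields $t\in A^M$ with $a_3-a_1+b=(\sig_1\sig_3-1)t$; applying $N_{M_3/M}$ to this identity gives $N_{M_3/M}\bigl(a_1+(\sig_3-1)t\bigr)=p(a_3+b)\in pA^{M_3}$, i.e.\ $w=t$ is exactly the element you need.
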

\begin{proof}
There exist $y_1\in A^{M_1}$ and $y_3\in A^{M_3}$ such that $\kappa_{M_1}(y_1)=\lam_1$ and $\kappa_{M_3}(y_3)=\lam_3$.
Let $w=\sum_{i=0}^{p-1}i\sig_3^iy_3$, and note that $w\in A^{M_3}$.
We have $(\sig_3-1)\sum_{i=0}^{p-1}i\sig_3^i=(p-1)\sig_3^p+1-\sum_{i=0}^{p-1}\sig_3^i$ in $\dbZ[G]$.
As $\sig_3^p\in M_3$ this gives
\[
(\sig_3-1)w=\bigl((p-1)\sig_3^p+1-N_{G/M_3}\bigr)y_3=py_3-N_{G/M_3}y_3.
\]
Setting $\omega=\kappa_{M_3}(w)\in H^1(M_3)$, the $G$-equivariance of $\kappa_{M_3}$ and assumption (iii) imply that
\[
\begin{split}
\sig_3\omega-\omega&=\kappa_{M_3}((\sig_3-1)w)=-\kappa_{M_3}(N_{G/M_3}y_3)=-\Res_{M_3}\kappa_G(N_{G/M_3}y_3)\\
&=-\Res_{M_3}\Cor_G\kappa_{M_3}(y_3)=-\Res_{M_3}\Cor_G\lam_3.
\end{split}
\]

By (iii),
\[
\begin{split}
\kappa_G(N_{G/M_1}y_1-N_{G/M_3}y_3)&=\Cor_G\kappa_{M_1}(y_1)-\Cor_G\kappa_{M_3}(y_3)\\
&=\Cor_G\lam_1-\Cor_G\lam_3=0.
\end{split}
\]
From (ii) we obtain $b\in A^G$ such that $N_{G/M_1}y_1-N_{G/M_3}y_3=pb$.

Next we choose  $\sig_1\in M_3$ such that $G=\langle M_1,\sig_1\rangle$,
and denote $M'=\langle M,\sig_1\sig_3\rangle$.
We note that $\sig_1,\sig_3$ commute modulo $M$, so $N_{M'/M}=\sum_{i=0}^{p-1}\sig_1^i\sig_3^i$ on $A^M$.
Therefore $N_{M'/M}=N_{G/M_3}$ on $A^{M_3}$, and $N_{M'/M}=N_{G/M_1}$ on $A^{M_1}$.
We obtain that
\[
N_{M'/M}(y_3-y_1+b)=N_{G/M_3}y_3-N_{G/M_1}y_1+pb=0.
\]
By (iv), $\hat H^{-1}(M'/M,A^M)=0$, so $y_3-y_1+b=(\sig_1\sig_3-1)t$ for some $t\in A^M$.
Therefore
\[
\begin{split}
(\sig_3-1)w&=py_3-N_{G/M_3}y_3=N_{M_3/M}y_3-N_{G/M_1}y_1+pb\\
&=N_{M_3/M}y_3-N_{M_3/M}y_1+pb=N_{M_3/M}(y_3-y_1+b)\\
&=N_{M_3/M}(\sig_1\sig_3-1)t=\sig_3\sig_1N_{M_3/M}t-N_{M_3/M}t=(\sig_3-1)N_{M_3/M}t,
\end{split}
\]
since $\sig_1N_{M'/M}=N_{M'/M}$ on $A^M$.
Thus $w-N_{M_3/M}t\in A^{\langle M_3,\sig_3\rangle}=A^G$.
Taking $\eta=\kappa_M(t)\in H^1(M)$, we obtain using (iii) that
\[
\omega-\Cor_{M_3}\eta=\kappa_{M_3}(w-N_{M_3/M}t)=\Res_{M_3}\kappa_G(w-N_{M_3/M}t)\in\Res_{M_3}H^1(G).
\]
Consequently, $\omega\in\Res_{M_3}H^1(G)+\Cor_{M_3}H^1(M)$.
\end{proof}

\begin{thm}
\label{Main Theorem for formations}
Let $(G,A,\{\kappa_U\}_U)$ be a $p$-Kummer formation and let $\chi_1,\chi_2,\chi_3\in H^1(G)$.
Then the Massey product $\langle\chi_1,\chi_2,\chi_3\rangle$ is not essential.
\end{thm}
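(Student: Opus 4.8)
The plan is to reduce the claim to the criterion proved in Theorem \ref{criterion for trivial Massey product}, and then verify the hypotheses of that criterion using the structural output of Proposition \ref{key prop}. By condition (i) in the definition of a $p$-Kummer formation, the exactness assumption on (\ref{ex seq}) needed throughout (for every open subgroup of index dividing $p$ and every character) is automatic. So I may freely invoke Propositions \ref{reversing the order} and \ref{reductions}, as well as Theorem \ref{criterion for trivial Massey product}.

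First I would apply Proposition \ref{reductions}, whose equivalence (1)$\Leftrightarrow$(2) reduces the problem to showing that $\langle\chi_1,\chi_2,\chi_3\rangle$ is not essential in the single remaining case where the pairs $\chi_1,\chi_3$ and $\chi_2,\chi_3$ are each $\dbF_p$-linearly independent. In particular $\chi_1,\chi_3$ are nonzero and independent, so I may set $N_1=\Ker(\chi_1)$, $N_3=\Ker(\chi_3)$, and pick $\sig_3\in G$ dual to the pair in the sense $\chi_1(\sig_3)=0$, $\chi_3(\sig_3)=1$, putting me exactly in the setup (\ref{setup}) of \S4. Assuming $\langle\chi_1,\chi_2,\chi_3\rangle\neq\emptyset$ (otherwise there is nothing to prove), Proposition \ref{structure of triple Massey products}(a) gives $\chi_1\cup\chi_2=0$, which is the second half of (\ref{setup}); and $\chi_2,\chi_3$ independent is the last standing hypothesis. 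It then suffices, by Theorem \ref{criterion for trivial Massey product} (in the form (3)$\Rightarrow$(1)), to produce a class $\omega\in H^1(N_3)$ satisfying both $\omega-\sig_3\omega=\Res_{N_3}\chi_2$ and $\omega\in\Res_{N_3}H^1(G)+\Cor_{N_3}H^1(M)$, where $M=N_1\cap N_3$.

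This is precisely what Proposition \ref{key prop} delivers, once I translate notation. I would set $M_1=N_1$, $M_3=N_3$, so that $M_1,M_3$ are distinct normal subgroups of index $p$ and $M=M_1\cap M_3$; the chosen $\sig_3\in M_1=N_1$ satisfies $G=\langle M_3,\sig_3\rangle$ since $\chi_3(\sig_3)=1$. The one genuine input I must supply is characters $\lam_1\in H^1(M_1)$ and $\lam_3\in H^1(M_3)$ with $\Cor_G\lam_1=\Cor_G\lam_3$ and with $\Cor_G\lam_3=\chi_2$ (so that the produced $\omega$ satisfies $\sig_3\omega-\omega=-\Res_{M_3}\Cor_G\lam_3=-\Res_{N_3}\chi_2$, i.e.\ the required relation after sign adjustment). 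The key point is that the vanishing $\chi_1\cup\chi_2=0$, together with the exact sequence (\ref{ex seq}) for $\chi=\chi_1$ at $U=G$, forces $\chi_2\in\Img(\Cor_G\colon H^1(N_1)\to H^1(G))$; this yields $\lam_1\in H^1(M_1)$ with $\Cor_G\lam_1=\chi_2$. A symmetric argument, or the symmetry $\langle\chi_1,\chi_2,\chi_3\rangle=\langle\chi_3,\chi_2,\chi_1\rangle$ from Proposition \ref{reversing the order} applied with the roles of $\chi_1,\chi_3$ interchanged, should similarly produce $\lam_3\in H^1(M_3)$ with $\Cor_G\lam_3=\chi_2$, giving the compatibility $\Cor_G\lam_1=\Cor_G\lam_3$.

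The main obstacle I anticipate is arranging these two lifts $\lam_1,\lam_3$ \emph{simultaneously} with a common corestriction equal to $\chi_2$, rather than merely obtaining each corestriction image separately; the hypothesis $\chi_2\cup\chi_3=0$ (the other vanishing from Proposition \ref{structure of triple Massey products}(a)) is what should secure the existence of $\lam_3$ via the exact sequence for $\chi=\chi_3$, and I expect to need to reconcile the two descriptions carefully. Once $\lam_1,\lam_3$ are in hand, Proposition \ref{key prop} produces $\omega$ with the two displayed properties, and feeding this $\omega$ into Theorem \ref{criterion for trivial Massey product}(3)$\Rightarrow$(1) gives $0\in\langle\chi_1,\chi_2,\chi_3\rangle$, so the Massey product is not essential. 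This completes the reduced case and hence, by Proposition \ref{reductions}, the theorem in general.
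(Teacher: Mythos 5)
Your proposal is correct and follows essentially the same route as the paper: reduce via Proposition \ref{reductions} to the case where $\chi_1,\chi_3$ and $\chi_2,\chi_3$ are independent, use exactness of (\ref{ex seq}) at $H^1(G)$ for $\chi=\chi_1$ and $\chi=\chi_3$ (via $\chi_1\cup\chi_2=0=\chi_2\cup\chi_3$) to produce $\lam_1,\lam_3$ with $\Cor_G\lam_1=\chi_2=\Cor_G\lam_3$, feed these into Proposition \ref{key prop} to get $\omega$, and conclude by Theorem \ref{criterion for trivial Massey product}. The ``obstacle'' you anticipate about arranging the two lifts simultaneously is not a real one, since each lift independently has corestriction exactly $\chi_2$, so the compatibility $\Cor_G\lam_1=\Cor_G\lam_3$ is automatic.
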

\begin{proof}
We assume that $\langle\chi_1,\chi_2,\chi_3\rangle$ is non-empty.
By Proposition \ref{structure of triple Massey products}(a),  $\chi_1\cup\chi_2=0=\chi_2\cup\chi_3$.
By Proposition \ref{reductions}, we may assume that the pairs $\chi_1,\chi_3$ and $\chi_2,\chi_3$ are $\dbF_p$-linearly independent.

Let $M_1=\Ker(\chi_1)$, $M_3=\Ker(\chi_3)$, and $M=M_1\cap M_3$, and choose $\sig_3\in M_1$ such that $G=\langle M_3,\sig_3\rangle$.
The exact sequence (\ref{ex seq}) yields $\lam_1\in H^1(M_1)$ and $\lam_3\in H^1(M_3)$ such that $\Cor_G\lam_1=\chi_2=\Cor_G\lam_3$.
Proposition \ref{key prop} gives rise to $\omega\in H^1(M_3)$ such that $\sig_3\omega-\omega=-\Res_{M_3}\chi_2$
and $\omega\in\Res_{M_3}H^1(G)+\Cor_{M_3}H^1(M)$.
By Theorem \ref{criterion for trivial Massey product}, $0\in\langle\chi_1,\chi_2,\chi_3\rangle$.
\end{proof}

\bigskip

Theorem \ref{Main Theorem for formations} and Example \ref{Kummer formations of fields} imply the Main Theorem.

\bigskip

\begin{bibdiv}
\begin{biblist}

\bib{Albert39}{book}{
   author={Albert, A. Adrian},
   title={Structure of Algebras},
   series={American Mathematical Society Colloquium   Publications, Vol. XXIV},
   publisher={American Mathematical Society, Providence, R.I.},
   date={1939},
}

\bib{CheboluEfratMinac12}{article}{
   author={Chebolu, Sunil K.},
   author={Efrat, Ido},
   author={Min\'a\v c, J\'an},
   title={Quotients of absolute Galois groups which determine the entire  Galois cohomology},
   journal={Math. Ann.},
   volume={352},
   date={2012},
    pages={205--221},
}

\bib{DeligneGriffithsMorganSullivan75}{article}{
   author={Deligne, Pierre},
   author={Griffiths, Phillip},
   author={Morgan, John},
   author={Sullivan, Dennis},
   title={Real homotopy theory of K\"ahler manifolds},
   journal={Invent. Math.},
   volume={29},
   date={1975},
   pages={245--274},
  }

\bib{Draxl83}{book}{
author={Draxl, P.K.},
title={Skew Fields},
series={London Math. Soc.\ Lect.\ Notes Series},
volume={81},
publisher={Cambridge University Press},
place={Cambridge},
date={1983},
}

\bib{Efrat14}{article}{
   author={Efrat, Ido},
   title={The Zassenhaus filtration, Massey products, and representations of profinite groups},
   journal={Adv. Math.},
   volume={263},
   date={2014},
   pages={389--411},
}

\bib{EfratMatzri15}{article}{
author={Efrat, Ido},
author={Matzri, Eliyahu},
title={Vanishing of Massey products and Brauer groups},
journal={Cand. Math.\ Bull.},
eprint={http://dx.doi.org/10.4153/CMB-2015-026-5},
date={2015},
status={to appear},
}

\bib{EfratMinac11a}{article}{
   author={Efrat, Ido},
   author={Min\'a\v c, J\'an},
   title={On the descending central sequence of absolute Galois groups},
   journal={Amer. J. Math.},
   volume={133},
   date={2011},
   pages={1503\ndash1532},
 }

\bib{EfratMinac11b}{article}{
author={Efrat, Ido},
author={Min\'a\v c, J\'an},
title={Galois groups and cohomological functors},
journal={Trans.\ Amer.\ Math.\ Soc.},
status={to appear},
eprint={arXiv:1103.1508},
date={2011},
}

\bib{Gartner15}{article}{
   author={G\"artner, Jochen},
   title={Higher Massey products in the cohomology of mild pro-$p$-groups},
   journal={J. Algebra},
   volume={422},
   date={2015},
   pages={788--820},
}
	
\bib{Fenn83}{book}{
author={Fenn, Roger A.},
title={Techniques of Geometric Topology},
Series={London Math.\ Soc.\ Lect. Notes Series},
volume={57},
publisher={Cambridge Univ. Press},
date={1983},
place={Cambridge}
}

\bib{Hillman12}{book}{
   author={Hillman, Jonathan},
   title={Algebraic Invariants of Links},
   series={Series on Knots and Everything},
   volume={52},
   edition={2},
   publisher={World Scientific Publishing Co. Pte. Ltd., Hackensack, NJ},
   date={2012},
   pages={xiv+353},
}

\bib{HopkinsWickelgren15}{article}{
author={Hopkins, M.},
author={Wickelgren, Kirsten},
title={Splitting varieties for triple Massey products},
journal={J. Pure Appl. Algebra},
volume={219},
date={2015},
pages={1304\ndash1319},
}

\bib{Huybrechts05}{book}{
   author={Huybrechts, Daniel},
   title={Complex Geometry},
   series={Universitext},
   publisher={Springer}
   place={Berlin},
   date={2005},
   pages={xii+309},
}

\bib{Kraines66}{article}{
author={Kraines, David},
title={Massey higher products},
journal={Trans.\  Amer.\  Math.\ Soc.},
volume={124},
date={1966},
pages={431\ndash449},
}

\bib{Matzri14}{article}{
author={Matzri, Eliyahu},
title={Triple Massey products and Galois cohomology},
status={a manuscript},
eprint={arXiv:1411.4146},
date={2014},
}

\bib{MinacTan13}{article}{
author={Min\'a\v c, J\'an},
author={T\^an, Nguyen Duy},
title={The Kernel Unipotent Conjecture and the vanishing of Massey products for odd rigid fields {\rm (with an appendix by I.\ Efrat, J.\ Min\'a\v c, and N.D. T\^an)}},
journal={Adv.\ Math.},
volume={273},
date={2015},
pages={242\ndash270},
}

\bib{MinacTan14a}{article}{
author={Min\'a\v c, J\'an},
author={T\^an, Nguyen Duy},
title={Triple Massey products and Galois theory},
journal={J.\ Eur.\ Math.\ Soc.},
status={to appear},
eprint={arXiv:1307.6624},
date={2014},
}

\bib{MinacTan14b}{article}{
author={Min\'a\v c, J\'an},
author={T\^an, Nguyen Duy},
title={Triple Massey products over global fields},
eprint={arXiv:1407.4586},
date={2014},
}

\bib{MinacTan14c}{article}{
author={Min\'a\v c, J\'an},
author={T\^an, Nguyen Duy},
title={Triple Massey products over all fields},
eprint={arXiv:1412.7611},
date={2014},
}

\bib{MinacTan15}{article}{
author={Min\'a\v c, J\'an},
author={T\^an, Nguyen Duy},
title={Construction of unipotent Galois extensions and Massey products},
eprint={ arXiv:1501.01346},
date={2015},
}

\bib{Morishita04}{article}{
author={Morishita, M.},
title={Milnor invariants and Massey products for prime numbers},
journal={Compos.\ Math.},
volume={140},
date={2004},
pages={69\ndash83},
}

\bib{NeukirchSchmidtWingberg}{book}{
  author={Neukirch, J{\"u}rgen},
  author={Schmidt, Alexander},
  author={Wingberg, Kay},
  title={Cohomology of Number Fields, Second edition},
  publisher={Springer},
  place={Berlin},
  date={2008},
}

\bib{Positselski11}{article}{
   author={Positselski, Leonid},
   title={Mixed Artin-Tate motives with finite coefficients},
   journal={Mosc. Math. J.},
   volume={11},
   date={2011},
   pages={317--402, 407--408},
}

\bib{Positselski15}{article}{
author={Positselski,  Leonid},
title={Koszulity of cohomology = $K(\pi,1)$-ness + quasi-formality},
eprint={arXiv:1507.04691},
date={2015},
}

\bib{Rowen84}{article}{
   author={Rowen, Louis H.},
   title={Division algebras of exponent $2$ and characteristic $2$},
   journal={J. Algebra},
   volume={90},
   date={1984},
   pages={71--83},
}

\bib{SharifiThesis}{thesis}{
author={Sharifi, Romyar T.},
title={Twisted Heisenberg representations and local conductors},
type={Ph.D.\ thesis},
place={The University of Chicago},
date={1999},
}

\bib{Sharifi07}{article}{
   author={Sharifi, Romyar T.},
   title={Massey products and ideal class groups},
   journal={J. Reine Angew. Math.},
   volume={603},
   date={2007},
   pages={1--33},
}

\bib{Tignol79}{article}{
   author={Tignol, J.-P.},
   title={Central simple algebras with involution},
   conference={
      title={Ring theory (Proc. Antwerp Conf.},
      address={NATO Adv. Study Inst.), Univ. Antwerp, Antwerp},
      date={1978},
   },
   book={
      series={Lecture Notes in Pure and Appl. Math.},
      volume={51},
      publisher={Dekker, New York},
   },
   date={1979},
   pages={279--285},
}

\bib{Tignol81}{article}{
author={Tignol, Jean-Pierre},
title={Corps \`a involution neutralis\'es par une extension ab\'elienne \'el\'ementaire,},
conference={
    title={Groupe de Brauer (Les Plans-sur-Bex 1980, M.\ Kervaire and M.\ Ojanguren, Eds.)}, },
book={series={Lecture Notes in Math.},
    volume={844},
    publisher={Springer, Berlin},
    }
date={1981},
pages={1\ndash34},
}

\bib{Voevodsky02}{article}{
   author={Voevodsky, Vladimir},
   title={Motivic cohomology with $\mathbb{Z}/2$-coefficients},
   journal={Publ. Math. Inst. Hautes \'Etudes Sci.},
   number={98},
   date={2003},
   pages={59--104},
}

\bib{Voevodsky11}{article}{
   author={Voevodsky, Vladimir},
   title={On motivic cohomology with $\mathbb{Z}/l$-coefficients},
   journal={Ann. of Math. (2)},
   volume={174},
   date={2011},
   pages={401--438},
}
		
\bib{VogelThesis}{thesis}{
author={Vogel, Denis},
title={Massey products in the Galois cohomology of number fields},
type={Ph.D.\ thesis},
place={Universit\"at Heidelberg},
date={2004},
}

\bib{Vogel05}{article}{
   author={Vogel, Denis},
   title={On the Galois group of $2$-extensions with restricted ramification},
   journal={J. Reine Angew. Math.},
   volume={581},
   date={2005},
   pages={117--150},
}

\bib{Wickelgren12a}{article}{
   author={Wickelgren, Kirsten},
   title={On 3-nilpotent obstructions to $\pi_1$ sections for $\mathbb{P}^1_\mathbb{Q}-\{0,1,\infty\}$},
   conference={
      title={The arithmetic of fundamental groups---PIA 2010},
   },
   book={
      series={Contrib. Math. Comput. Sci.},
      volume={2},
      publisher={Springer, Heidelberg},
   },
   date={2012},
   pages={281--328},
}
		
\bib{Wickelgren12b}{article}{
   author={Wickelgren, Kirsten},
   title={$n$-nilpotent obstructions to $\pi_1$ sections of $\mathbb{P}^1-\{0,1,\infty\}$ and Massey products},
   conference={
      title={Galois-Teichm\"uller theory and arithmetic geometry},
   },
   book={
      series={Adv. Stud. Pure Math.},
      volume={63},
      publisher={Math. Soc. Japan, Tokyo},
   },
   date={2012},
   pages={579--600},
}

\end{biblist}
\end{bibdiv}

\end{document}